\newcommand{\eps}{{\varepsilon}}
\newtheorem{theorem}{Theorem}[section]
\newtheorem{corollary}[theorem]{Corollary}
\newtheorem{proposition}[theorem]{Proposition}
\DeclareMathOperator{\ee}{\mathbf{e}}
\begin{document}
\title{On  the density of  exponential functionals  of L\'evy processes}
\author{J. C. Pardo\footnote{Centro de Investigaci\'on en Matem\'aticas (CIMAT A.C.), Calle Jalisco s/n, Col. Valenciana, A. P. 402, C.P.
36000, Guanajuato, Gto. Mexico. E-mail: jcpardo@cimat.mx},\hspace{0.7cm}V. Rivero\footnote{Centro de Investigaci\'on en Matem\'aticas (CIMAT A.C.), Calle Jalisco s/n, Col. Valenciana, A. P. 402, C.P.
36000, Guanajuato, Gto. Mexico. E-mail: rivero@cimat.mx}\hspace{0.5cm}and\hspace{0.5cm}K. van Schaik\footnote{School of Mathematics, University of Manchester, Oxford road, Manchester, M13 9PL. United Kingdom. E-mail: Kees.vanSchaik@manchester.ac.uk. This author gratefully acknowledges being supported by a post-doctoral grant from the AXA Research Fund}}
\date{}
\maketitle

\begin{abstract} In this paper, we study the existence of the  density  associated to the  exponential functional of the L\'evy process $\xi$,
\[ 
I_{\ee_q}:=\int_0^{\ee_q} e^{\xi_s} \, \mathrm{d}s, 
\] 
where $\ee_q$ is an  independent exponential r.v.  with parameter $q\geq 0$. In the case when $\xi$ is the negative of a  subordinator,  we prove that the density of $I_{\ee_q}$, here denoted by $k$,  satisfies an integral equation that generalizes the one found by   Carmona et al. \cite{Carmona97}. Finally when $q=0$, we describe explicitly the asymptotic behaviour at $0$ of the density $k$ when $\xi$ is the negative of a subordinator  and at $\infty$ when $\xi$ is a  spectrally positive  L\'evy process that drifts to $+\infty$.

\bigskip

\noindent {\it Keywords:} L\'evy processes, exponential functional, subordinators, self-similar Markov processes.

\medskip

 \noindent{\it AMS 2000 subject classifications: 60G51}

\end{abstract}

\section{Introduction}
\setcounter{equation}{0}

A real-valued L\'evy process is a stochastic process issued from the origin with stationary and independent increments and almost sure right continuous paths with left-limits. We write $\xi=(\xi_t,t\geq 0)$ for its trajectory and $\mathbb{P}$ for its law.  The law $\mathbb{P}$ of a L\'evy processes is characterized by its one-time transition probabilities. In particular there always exists a  triple $(a, \sigma^2, \Pi)$ where $a\in\mathbb{R}$, $\sigma^2\ge 0$ and $\Pi$ is a measure on $\mathbb{R}\backslash\{0\}$ satisfying the integrability condition $\int_{\mathbb{R}}(1\wedge x^2)\Pi({\rm d}x)<\infty$, such that, for $t\geq 0$ and $z \in\mathbb{R}$ 
\begin{equation}
\mathbb{E}[e^{i z \xi_t}] = \exp\left\{ - \Psi(z)t\right\},
\end{equation}
where
\[
\Psi(z) = i a z + \frac{1}{2}\sigma^2 z^2 + \int_{\mathbb{R}}\Big(1- e^{i z x} +i zx\mathbf{1}_{\{|x|<1\}}\Big)\Pi({\rm d}x).
\]
In the case when $\xi$ is a subordinator, 
the L\'evy measure $\Pi$ has support on $[0,\infty)$ and fulfils the extra condition $\int_{(0,\infty)}(1\wedge x)\Pi({\rm d}x)<\infty$. Hence, the characteristic exponent $\Psi$ can be expressed as
\[
\Psi(z)=-icz+\int_{(0,\infty)} \Big(1-e^{i z x}\Big)\Pi({\rm d} x),
\]
where $c\ge 0$ and is known as the drift coefficient. It is well-known that the function $\Psi$ can be extended analytically on the complex upper half-plane, so the Laplace exponent of $\xi$ is given by
\[
\phi(\lambda):=-\log \mathbb{E}[e^{-\lambda\xi_1}]=\Psi(i\lambda)=c\lambda+\int_{(0,\infty)}\Big(1-e^{-\lambda x}\Big)\Pi(\mathrm{d}x).
\]
Similarly, in the case when $\xi$ is a spectrally negative L\'evy process (i.e. has no positive jumps), 
the L\'evy measure $\Pi$ has support on $(-\infty,0)$ and  the characteristic exponent $\Psi$ can be written as
\[
\Psi(z)=i a z + \frac{1}{2}\sigma^2 z^2 + \int_{(-\infty,0)}\Big(1- e^{i z x} +i zx\mathbf{1}_{\{x>-1\}}\Big)\Pi({\rm d}x).
\]
 It is also well-known that the function $\Psi$ can be extended analytically on the complex lower half-plane, so its Laplace exponent satisfies
\[
\psi(\lambda):=\log \mathbb{E}[e^{\lambda\xi_1}]=-\Psi(-i\lambda)= a \lambda + \frac{1}{2}\sigma^2 \lambda^2 + \int_{(-\infty,0)}\Big(e^{\lambda x} -1+-\lambda x\mathbf{1}_{\{x>-1\}}\Big)\Pi({\rm d}x).
\]
In this article, one of our aims is to study the existence of the  density  associated to the  exponential functional
\[ 
I_{\ee_q}:=\int_0^{\ee_q} e^{\xi_s} \, \mathrm{d}s, 
\] 
where $\ee_q$ is an  exponential r.v. independent of the L\'evy process $\xi$ with parameter $q\geq 0$ If $q=0,$ then $\ee_q$ is understood as $\infty$. In this case, we assume that the process $\xi$ drifts towards $-\infty$ since it is a necessary and sufficient condition for the almost sure finiteness of $I:=I_{\infty}$,  see for instance  Theorem 1 in Bertoin and Yor \cite{Bertoin05}.  

Up to our knowledge nothing is known about the existence of the density of $I_{\ee_q}$ when $q>0$. In the case when  $q=0$, the existence of the density of $I$ has been proved by  Carmona et al. \cite{Carmona97} for  L\'evy processes whose jump structure is of finite variation and recently by Bertoin et al. \cite{BertoinLinderMaller} (see Theorem 3.9) for  any  real-valued L\'evy process. In particular when $\xi$ is the negative of a subordinator such that $\mathbb{E}[|\xi_{1}|]<\infty,$  Carmona et al. \cite{Carmona97} (see Proposition 2.1)  proved that the r.v. $I$ has a density, $k$, that  is the unique (up to a multiplicative constant) $L^{1}$ positive solution to the equation 
\begin{equation}\label{CPY_main}
(1-cx)k(x) = \int_x^{\infty} \overline{\Pi}(\log(y/x)) k(y) \, \mathrm{d}y,\qquad x\in(0,1/c),
\end{equation} 
where $\overline{\Pi}(x):=\Pi(x,\infty)$.  

Here, we generalize  the above  equation. Indeed,  we establish an integral equation for the density of $I_{\ee_q}$,   $q\ge 0$, when $\xi$ is the negative of a subordinator and we note that when $q=0$, the condition $\mathbb{E}[|\xi_{1}|]<\infty$ is not essential for the existence of its density and the validity of (\ref{CPY_main}). \\

Another interesting problem is determining the behaviour of  the density of  the exponential functional $I$ at $0$ and  at $\infty$. This problem has been recently studied by Kuznetzov \cite{kuz}  for L\'evy processes with rational Laplace exponent  (at $0$ and at $\infty$), by Kuznetsov and Pardo \cite{KuP}  for hypergeometric L\'evy processes (at $0$ and at $\infty$) and by Patie \cite{Pat} for spectrally negative L\'evy processes (at $\infty$). In most of the applications, it is enough to have estimates of  the tail behaviour $\mathbb{P}(I\le t)$ when $t$ goes to $0$ and/or  $\mathbb{P}(I\ge t)$ when $t$ goes to $\infty$. The tail behaviour $\mathbb{P}(I\le t)$ was studied by Pardo \cite{Pardo06} in the case where  the underlying L\'evy process is spectrally positive and its Laplace exponent is regularly varying at infinity with index $\gamma\in(1,2)$, and by Caballero and Rivero \cite{CR} in the case when $\xi$ is the negative of a subordinator whose Laplace exponent is regularly varying at $0$.  Furthermore, the tail behaviour $\mathbb{P}(I\ge t)$ has been studied in a general setting, see  \cite{CP, MZ, Riv, Riv1} . The second main result  of this paper is related to this problem. Namely, we describe explicitly the asymptotic behaviour at $0$ of the density of $I$ when $\xi$ is a subordinator which in particular implies the behaviour of $\mathbb{P}(I<t)$ near $0$.

The paper is organized as follows: in Section 2 we state our main results, in particular we study the density of $I_{\ee_q}$ and the asymptotic behaviour at $0$ of the density of the exponential functional associated to the negative of a subordinator. Section 3 is  devoted to the proof of the main results and in Section 4, we give some  examples and some numerical results for the density  of $I_{\ee_q}$ when the driving process is the negative of a subordinator.

\section{Main results}
Our first  main result states that $I_{\ee_q}$ has a density, for $q>0$.  Before we establish our first Theorem,  we need to introduce some notation and recall some facts about positive self-similar Markov processes (pssMp) which will be our main tool in this first part. 

Let    $(\xi^\dag_{t}, t\geq 0)$ be the process obtained by killing $\xi$ at an independent exponential time of parameter $q>0$, here denoted by $\ee_{q}.$ The law and the lifetime of $\xi^\dag$ are denoted by $\mathbb{P}^\dag$  and  $\beta$, respectively. 

We first note that
$$
\Big(I,\mathbb{P}^\dag\Big)=\left(\int^{\beta}_{0}\exp\Big\{\xi^\dag_{t}\Big\}\mathrm{d}t,\mathbb{P}^\dag\right)\stackrel{d}{=}\left(\int^{\ee_{q}}_{0}e^{\xi_{t}}\mathrm{d}t, \mathbb{P}\right).
$$ 
For $x\geq 0$ let $\mathbb{Q}_{x}$ be the law of $X^{(x)}$, the positive self-similar Markov process with self-similarity index $1$ issued from $x,$ associated to $\xi^\dag$ via its Lamperti's representation (see \cite{la} for more details on this representation), that is for $x> 0$  
$$X^{(x)}_{t}=\begin{cases}
x\exp\Big\{\xi^\dag_{\tau(t/x)}\Big\},& \text{if}\ \tau(t/x)<\infty\\
0,&\text{if}\ \tau(t/x)=\infty,
\end{cases}, \quad t\geq 0;
$$ where $$\tau(s)=\inf\left\{r>0:\int^{r}_{0}e^{\xi^\dag_{t}}\mathrm{d}t>s\right\},\qquad \inf\{\emptyset\}=\infty,$$ and $0$ is understood as a cemetery state. The process $X^{(x)}$ is a strong Markov process and it fulfills the scaling property, i.e. for $k>0,$ 
$$
\Big(kX^{(x)}_{t/k}, t\geq 0\Big)\stackrel{d}{=}\Big(X^{(kx)}_{t}, t\geq 0\Big).
$$
We denote by $T^{(x)}_{0}:=\inf\{t>0: X^{(x)}_{t}=0\}$, the first hitting time of $X^{(x)}$ at 0. Observe that for $s>0,$ we have the following equivalences, 
\[
\tau(s)<\infty\quad\textrm{ iff }\quad \tau(s)\leq\beta\quad\textrm{ iff }\quad\ s\leq\int^{\beta}_{0}e^{\xi^{\dag}_{t}}\mathrm{d}t.
\]
Hence, it follows from the construction of $X$ that the following equality in law follows
$$\Big(T_{0},\mathbb{Q}_{1}\Big)\stackrel{d}{=}\left(\int^{\ee_{q}}_{0}e^{\xi_{t}}\mathrm{d}t, \mathbb{P}\right).$$ 

Now, we have all the elements to establish our first main result. It concerns the existence of the density  of $I_{\ee_{q}}.$
\begin{theorem}\label{densityq}
Let $q>0$, then the function $$h(t):=q\mathbb{Q}_{1}\left[\frac{1}{X_{t}}\mathbf{1}_{\{t<T_{0}\}}\right],\qquad t\geq 0,$$ is a density for the law of $I_{\ee_{q}}.$
\end{theorem}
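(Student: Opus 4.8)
The plan is to compute the law of $T_0$ under $\mathbb{Q}_1$ by conditioning on the value of the pssMp at the fixed time $t$ and exploiting the scaling property, which is exactly what relates the hitting time of $0$ for the self-similar process to the exponential functional. Concretely, I would start from the identity $T_0^{(x)} = x\int_0^\beta e^{\xi^\dag_s}\,\mathrm{d}s$ (a direct consequence of Lamperti's representation and the time change $\tau$), so that under $\mathbb{Q}_x$ the random variable $T_0$ is distributed as $x I_{\ee_q}$. In particular $\mathbb{Q}_x(T_0 > t) = \mathbb{P}(I_{\ee_q} > t/x)$.

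The key step is the following: by the Markov property of $X$ at the deterministic time $t$, on the event $\{t < T_0\}$ the process restarts from $X_t$ and runs for a further (independent, given $X_t$) time distributed as $T_0$ under $\mathbb{Q}_{X_t}$. Hence for $u>0$,
\[
\mathbb{Q}_1(T_0 > t+u) = \mathbb{Q}_1\!\left[\mathbf{1}_{\{t<T_0\}}\,\mathbb{Q}_{X_t}(T_0 > u)\right] = \mathbb{Q}_1\!\left[\mathbf{1}_{\{t<T_0\}}\,\mathbb{P}\!\left(I_{\ee_q} > u/X_t\right)\right].
\]
Now I would differentiate in $u$. Writing $f$ for a (candidate) density of $I_{\ee_q}$, the right-hand side becomes $\mathbb{Q}_1\big[\mathbf{1}_{\{t<T_0\}} \, (1/X_t)\, f(u/X_t)\big]$ after dividing by $X_t$. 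The point, however, is that we do not yet know $I_{\ee_q}$ has a density; so instead I would proceed in the other direction. Using that $\ee_q$ is exponential, one has the elementary ``lack of memory'' type relation for the exponential functional of a killed process: conditionally on $\beta > t$, the process $\xi^\dag$ restarts as a fresh copy, and $\beta - t$ is again exponential with parameter $q$. This translates, via Lamperti, into a renewal-type equation for $\mathbb{Q}_1(T_0 > \cdot)$ whose solution can be read off. Alternatively — and this is the route I would actually take — I would use the known fact (from the excerpt's setup, $\big(T_0,\mathbb{Q}_1\big)\stackrel{d}{=}I_{\ee_q}$ together with the scaling property) plus the identity
\[
\mathbb{Q}_1(T_0 \in \mathrm{d}t) \;=\; \lim_{u\downarrow 0}\frac{1}{u}\,\mathbb{Q}_1\!\left[\mathbf{1}_{\{t<T_0\}};\, T_0 - t \le u\right]\mathrm{d}t,
\]
and evaluate the right-hand side using the Markov property and the fact that, by the representation $T_0^{(x)} \stackrel{d}{=} x I_{\ee_q}$, we have $\mathbb{Q}_x(T_0 \le u) = \mathbb{P}(I_{\ee_q} \le u/x) \sim (q/x)u$ as $u\downarrow 0$, since $\lim_{s\downarrow 0} s^{-1}\mathbb{P}(I_{\ee_q}\le s) = q$ (this last limit follows because $I_{\ee_q}$ has, near $0$, the same behaviour as $\ee_q$ itself up to the drift term, $I_{\ee_q}\ge$ a small multiple of $\ee_q$ on $\{\xi \text{ stays near }0\}$, and $\mathbb{P}(\ee_q \le s) = qs + o(s)$). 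Plugging this in gives
\[
\mathbb{Q}_1(T_0 \in \mathrm{d}t) \;=\; q\,\mathbb{Q}_1\!\left[\frac{1}{X_t}\mathbf{1}_{\{t<T_0\}}\right]\mathrm{d}t,
\]
which is the claim.

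I expect the main obstacle to be the interchange of limit and expectation needed to justify passing $\lim_{u\downarrow 0} u^{-1}$ inside $\mathbb{Q}_1[\,\cdot\,]$, i.e. the uniform integrability of $u^{-1}\mathbf{1}_{\{t<T_0\}}\mathbb{Q}_{X_t}(T_0\le u)$ as $u\downarrow 0$. One must control $\mathbb{P}(I_{\ee_q}\le s)/s$ uniformly for small $s$ and handle the region where $X_t$ is small (so that $u/X_t$ is not small); here the explicit bound $\mathbb{P}(I_{\ee_q}\le s)\le \mathbb{P}(\ee_q \le s/\text{(something)})$ or a direct Markov-inequality estimate $\mathbb{P}(I_{\ee_q}\le s) \le \mathbb{E}[e^{-\lambda/I_{\ee_q}}]e^{\lambda/s}$-type argument should suffice to get a bound of the form $\mathbb{P}(I_{\ee_q}\le s)\le Cs$ for all $s>0$, after which dominated convergence applies since $\mathbb{Q}_1[1/X_t \,;\, t<T_0]<\infty$ (which itself needs a brief justification, e.g. from $\mathbb{E}[T_0]<\infty$ arguments or directly from finiteness of the candidate density's integral). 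A secondary, more bookkeeping-level obstacle is verifying that $h$ indeed integrates to $1$ over $(0,\infty)$; this should come out of $\int_0^\infty h(t)\,\mathrm{d}t = -\int_0^\infty \frac{\mathrm{d}}{\mathrm{d}t}\mathbb{Q}_1(T_0>t)\,\mathrm{d}t = \mathbb{Q}_1(T_0>0) = 1$ once the density identity is established, together with right-continuity of $t\mapsto \mathbb{Q}_1(T_0>t)$ at $0$.
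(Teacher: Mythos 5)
Your two structural identities (the Markov property at time $t$ and the scaling relation $\mathbb{Q}_x(T_0>u)=\mathbb{P}(I_{\ee_q}>u/x)$) are correct, but the route you choose then hinges entirely on the small-time behaviour of the law of $I_{\ee_q}$, namely (a) $\lim_{s\downarrow 0}s^{-1}\mathbb{P}(I_{\ee_q}\le s)=q$ and (b) a uniform bound $\mathbb{P}(I_{\ee_q}\le s)\le Cs$ allowing dominated convergence, plus (c) $\mathbb{Q}_1\bigl[X_t^{-1};t<T_0\bigr]<\infty$. None of these is established by your sketch, and they are not minor. For (a), the easy event splitting $\{\ee_q\le(1+\delta)s\}\cup\{\ee_q>(1+\delta)s,\ I_{\ee_q}\le s\}$ gives the lower bound, but on the second event one only gets $\inf_{u\le(1+\delta)s}\xi_u\le-\log(1+\delta)$, whose probability is of order $s\,\Pi\bigl((-\infty,-\log(1+\delta))\bigr)$; this does not vanish relative to $s$ as $\delta\downarrow 0$ when the L\'evy measure is infinite, so the exact constant $q$ requires a genuinely finer argument (in the paper the corresponding statement, $k(x)\to q$ as $x\downarrow 0$, is Theorem 2.6(i), proved only for subordinators and only \emph{after} the integral equation is available). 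For (b), your suggested Markov/Chernoff-type bounds do not deliver $Cs$: note that $\mathbb{E}[I_{\ee_q}^{-1}]=\infty$ precisely when the density stays bounded away from $0$ near $0$, so $\mathbb{P}(1/I_{\ee_q}\ge 1/s)\le s\,\mathbb{E}[I_{\ee_q}^{-1}]$ is vacuous; a workable bound exists (split on $\{\ee_q\le 2s\}$ and use the standard small-time estimate $\mathbb{P}(\inf_{u\le 2s}\xi_u\le-\log 2)=O(s)$), but it is not the one you propose. Finally, your treatment of $\int_0^\infty h(t)\,\mathrm{d}t=1$ is circular: you plan to deduce it from $h$ being the density, whereas in a derivative-based argument one needs this normalisation (or absolute continuity) as an independent input to upgrade pointwise convergence of difference quotients to the density statement.

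The paper's proof avoids all of this by never differentiating. It first shows, for every $x>0$, that $\int_0^\infty q\,\mathbb{Q}_x\bigl[X_t^{-1};t<T_0\bigr]\,\mathrm{d}t=1$ directly, via the change of variables $u=\tau(t/x)$ in the Lamperti representation, which reduces the integral to $q\,\mathbb{E}[\beta]=1$; this simultaneously gives your missing ingredient (c). Then the Markov property and Fubini yield $\int_t^\infty h(s)\,\mathrm{d}s=\mathbb{Q}_1\bigl[\bigl(\int_0^\infty h(s,X_t)\,\mathrm{d}s\bigr)\mathbf{1}_{\{t<T_0\}}\bigr]=\mathbb{Q}_1(t<T_0)=\mathbb{P}(I_{\ee_q}>t)$, which is the claim, with no limit interchange and no knowledge of the behaviour of $\mathbb{P}(I_{\ee_q}\le s)$ near $0$. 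In short: your approach could in principle be completed, but it requires proving sharp small-time asymptotics of the law of $I_{\ee_q}$ for a general L\'evy process, which is harder than the theorem itself and is left unproved in your sketch; the normalisation identity you relegate to ``bookkeeping'' is in fact the key step of the correct argument.
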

\begin{corollary}\label{coro1}
Assume $q>0$ and that $\xi$ is a subordinator. Then the law of the r.v.  $I_{\ee_{q}}$ is a mixture of exponential, that is its law has a density $h$ on $(0,\infty)$ which is completely monotone. Furthermore, $\lim_{t\downarrow 0}h(t)=q$. 
\end{corollary}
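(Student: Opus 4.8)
The plan is to leverage Theorem \ref{densityq}, which identifies the density of $I_{\ee_q}$ with $h(t)=q\,\mathbb{Q}_1[X_t^{-1}\mathbf 1_{\{t<T_0\}}]$, together with the extra structure available when $\xi$ is a subordinator. When $\xi$ is a subordinator, the killed process $\xi^\dag$ has nondecreasing paths, hence its Lamperti transform $X^{(x)}$ is itself nondecreasing up to the time it is sent to the cemetery $0$; but $X^{(1)}$ issued from $1$ cannot decrease, so $X^{(1)}_t\geq 1$ on $\{t<T_0\}$. In particular $h$ is well defined and finite, and moreover $h(t)\leq q$ for all $t$, which already controls the behaviour near $0$ from above.

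First I would prove complete monotonicity. The idea is to write $X^{(1)}$ as an increasing process and express $1/X_t$ as a Laplace transform: $1/X_t = \int_0^\infty e^{-\lambda X_t}\,\mathrm d\lambda$. Plugging this into the formula for $h$ and using Tonelli,
\[
h(t) = q\int_0^\infty \mathbb{Q}_1\!\left[e^{-\lambda X_t}\mathbf 1_{\{t<T_0\}}\right]\mathrm d\lambda .
\]
Then it suffices to show that for each fixed $\lambda\ge 0$ the map $t\mapsto \mathbb{Q}_1[e^{-\lambda X_t}\mathbf 1_{\{t<T_0\}}]$ is completely monotone, since a nonnegative mixture of completely monotone functions is completely monotone. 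This last fact should follow from the Markov property and the monotonicity of $X^{(1)}$: since $X^{(1)}$ is nondecreasing and $0$ is a trap, $e^{-\lambda X_t}\mathbf 1_{\{t<T_0\}}$ is nonincreasing in $t$ along almost every path, so its expectation is nonincreasing; and an analogous argument applied to iterated differences, exploiting that the semigroup of $X^{(1)}$ maps the cone of completely monotone functions of $x$ into itself (again because $X^{(1)}$ only moves upward), should give complete monotonicity in $t$. I would set $g_\lambda(x):=\mathbb{Q}_x[e^{-\lambda X_t}\mathbf 1_{\{t<T_0\}}]$ and argue that for fixed $t$ it is completely monotone in $x$ — this uses that the increments of $\xi^\dag$ are a.s. nonnegative so that $X^{(x)}_t/x\ge 1$ and is increasing in law — and then use the semigroup property to bootstrap to the $t$-variable. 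By Bernstein's theorem, complete monotonicity of $h$ is exactly the statement that the law of $I_{\ee_q}$ is a mixture of exponentials.

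Second I would identify the limit $\lim_{t\downarrow 0}h(t)=q$. Since $X^{(1)}_0=1$ and $X^{(1)}$ has right-continuous paths (being a time change of the right-continuous $\xi^\dag$), we have $X^{(1)}_t\to 1$ a.s.\ as $t\downarrow 0$ on $\{t<T_0\}$, and of course $\mathbb{Q}_1(t<T_0)\to 1$; moreover $0< X_t^{-1}\mathbf 1_{\{t<T_0\}}\leq 1$, so dominated convergence gives $\mathbb{Q}_1[X_t^{-1}\mathbf 1_{\{t<T_0\}}]\to 1$, hence $h(t)\to q$. Equivalently, and perhaps cleaner, one can argue directly from the defining representation $I_{\ee_q}=\int_0^{\ee_q}e^{\xi_s}\,\mathrm ds$: for small $t$, $\mathbb P(I_{\ee_q}\le t)=\mathbb P(\int_0^{\ee_q}e^{\xi_s}\mathrm ds\le t)$, and since $e^{\xi_s}\geq e^{\xi_0}=1$ for a subordinator, $\int_0^{\ee_q}e^{\xi_s}\mathrm ds\geq \ee_q$, so $\mathbb P(I_{\ee_q}\le t)\leq \mathbb P(\ee_q\le t)=1-e^{-qt}$; combined with the complementary lower bound coming from the fact that $e^{\xi_s}\to 1$ as $s\downarrow 0$ (so that $\int_0^t e^{\xi_s}\mathrm ds = t(1+o(1))$ on the event $\{\ee_q>t\}$), one gets $\mathbb P(I_{\ee_q}\le t) = qt + o(t)$, i.e.\ $h(0+)=q$.

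The main obstacle I anticipate is the complete monotonicity claim: one must make rigorous the assertion that the semigroup of the (killed) increasing self-similar process preserves complete monotonicity in the space variable, and that this propagates to monotone-type behaviour in $t$ of the iterated differences. A clean way around the subtleties is to note that, by the scaling property, $\mathbb{Q}_1[e^{-\lambda X_t}\mathbf 1_{\{t<T_0\}}] = \mathbb{Q}_\lambda[e^{-X_{\lambda t}}\mathbf 1_{\{\lambda t<T_0\}}]$, reducing everything to the single function $u(x):=\mathbb{Q}_x[e^{-X_1}\mathbf 1_{\{1<T_0\}}]$ evaluated along a curve; alternatively one can go back to the subordinator picture and express $I_{\ee_q}$ more explicitly, but I expect the self-similar/Markov route above, possibly combined with the known fact (from the theory of generalised Ornstein--Uhlenbeck processes or from \cite{Carmona97}) that the exponential functional of minus a subordinator is moment-determinate with completely monotone density, to be the most direct. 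Once complete monotonicity is in hand, the mixture-of-exponentials statement and the value at $0$ follow as above.
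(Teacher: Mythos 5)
Your limit computation is fine (dominated convergence with $0<X_t^{-1}\mathbf 1_{\{t<T_0\}}\le 1$ and right-continuity of the paths gives $h(0+)=q$), and the reduction $1/X_t=\int_0^\infty e^{-\lambda X_t}\,\mathrm d\lambda$ is legitimate. The genuine gap is in the complete monotonicity step. You reduce to showing that, for each fixed $\lambda$, $t\mapsto\mathbb{Q}_1\big[e^{-\lambda X_t}\mathbf 1_{\{t<T_0\}}\big]$ is completely monotone, and you propose to obtain this from the facts that the paths of $X$ are nondecreasing and that the semigroup ``maps the cone of completely monotone functions of $x$ into itself because $X$ only moves upward''. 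Neither of these delivers the claim: upward motion of the paths only controls the sign of the \emph{first} $t$-derivative (monotonicity of $t\mapsto P_tf(1)$ for decreasing $f$), and even if the semigroup did preserve complete monotonicity in the space variable, that says nothing about the signs of the higher $t$-derivatives. Complete monotonicity in $t$ amounts to $(-1)^n\frac{\mathrm d^n}{\mathrm dt^n}P_tf(1)\ge 0$ for every $n$, i.e.\ to sign control of the iterated generator, $(-1)^n\mathcal L^nf\ge 0$; ``iterated differences'' of a monotone path functional do not alternate in sign for free, and the scaling reduction you sketch at the end does not supply this either. The fallback appeal to the completely monotone density of the exponential functional of minus a subordinator (the $q=0$ case in \cite{Carmona97}) concerns a different object and would in any case be circular here.

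What makes the proof work --- and this is the paper's route, following \cite{Bertoin01} --- is that the power functions $x\mapsto x^{-p}$ behave as eigenvectors for this dynamics: one first shows that under $\mathbb{Q}_x$ the variable $x^{p}\int_0^{T_0}X_s^{-(p+1)}\,\mathrm ds$ has the law of $\int_0^{\ee_q}e^{-p\xi_s}\,\mathrm ds$ (and the corresponding functional started at time $t$ is independent of the past), whence
\[
\frac{\partial}{\partial t}\,\mathbb{Q}_1\big[X_t^{-p};\,t<T_0\big]=-(\phi(p)+q)\,\mathbb{Q}_1\big[X_t^{-(p+1)};\,t<T_0\big],
\]
and iterating over $p=1,2,\dots$ gives the alternating signs of all derivatives of $h$, the value $1$ at $t=0$ (hence $h(0+)=q$), and then the mixture-of-exponentials statement via Bernstein-type results (the paper cites Theorem 51.6 and Proposition 51.8 of \cite{Sa}). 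If you want to rescue your Laplace-decomposition route you would need an analogous generator computation for $f_\lambda(x)=e^{-\lambda x}$, e.g.\ showing that $-\mathcal Lf_\lambda$ is again completely monotone in $x$ (using identities such as $(e^{-\lambda x}-e^{-\lambda xe^{z}})/x=\int_\lambda^{\lambda e^{z}}e^{-sx}\,\mathrm ds$) and then iterating inside the completely monotone cone; some quantitative computation of this kind is indispensable --- path monotonicity alone cannot yield complete monotonicity in $t$.
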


In the sequel, we assume that $\xi=-\zeta$ where $\zeta$ is  a  subordinator and we denote by $U_q({\rm d} x)$  the   renewal measure of the killed subordinator $(\zeta_t,t\le \ee_q)$, i.e.
\begin{equation}\label{renewal}
\mathbb{E}\left[\int_0^{\ee_q} f(\zeta_t){\rm d}t\right]=\int_{[0,\infty)} f(x)U_q({\rm d}x),
\end{equation}
where $f$ is a positive measurable function. If the renewal measure is absolutely continuous with respect to the Lebesgue measure, the function $u_q(x)=U_q({\rm d}x)/{\rm d}x$, is usually called the renewal density. If $q=0$, we denote $U_0$ and $u_0$ by $U$ and $u$.

Before stating our first main result, which is a generalization of the integral equation (\ref{CPY_main}) of Carmona et al. for subordinators, in the next proposition we establish that the density of $I_{\ee_q}$ solves an integral equation  in terms of the renewal measure $U_q$.  
\begin{proposition} Let $q\ge 0$. The  random variable $I_{\ee_q}$ has a density that we denote by $k$, and it solves the equation
\begin{equation}\label{prop2}
\int_y^\infty k(x)\mathrm{d}x=\int_0^\infty k(ye^x)U_q(\mathrm{d}x), \qquad \textrm{ almost everywhere.}
\end{equation}
\end{proposition}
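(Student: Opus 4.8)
The plan is to work with the pssMp representation established above and derive the integral equation by two complementary routes, then reconcile them. For $q>0$ we already have, by Theorem~\ref{densityq}, the explicit density $k(t)=h(t)=q\,\mathbb{Q}_1[X_t^{-1}\mathbf{1}_{\{t<T_0\}}]$, so the natural approach is to exploit the Markov property of $X$ at a fixed time together with the scaling property. Concretely, I would condition on the value of $X_t$ at a small time, or rather use the fact that $T_0$ under $\mathbb{Q}_x$ equals in law $x\,I_{\ee_q}$ by self-similarity (index $1$), so that $k$ and the law of $T_0$ under $\mathbb{Q}_x$ are related by a simple scaling $k_x(t)=x^{-1}k(t/x)$. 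The identity $\int_y^\infty k(x)\,\mathrm{d}x = \mathbb{P}(I_{\ee_q}>y)=\mathbb{Q}_1(T_0>y)$ then needs to be expressed in terms of the renewal measure $U_q$.

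The cleanest derivation, I expect, goes directly through the Lamperti time change rather than the pssMp. Write $I_{\ee_q}=\int_0^{\ee_q}e^{-\zeta_t}\,\mathrm{d}t$. The event $\{I_{\ee_q}>y\}$ says that the clock $\tau$ has not yet exhausted the amount $y$, and one wants to split the integral at the last time $\zeta$ is below a given level, or use the renewal structure of $\zeta$ killed at $\ee_q$. The key computation is: for the tail $\mathbb{P}(I_{\ee_q}>y)$, apply the strong Markov property of $\zeta$ (together with the lack-of-memory of $\ee_q$) at an appropriate stopping time — for instance the first passage time of $\zeta$ above a level $a$ — to get $I_{\ee_q}\stackrel{d}{=}\int_0^{T_a}e^{-\zeta_t}\,\mathrm{d}t + e^{-\zeta_{T_a}}\,\widehat I_{\ee_q}$ on $\{T_a<\ee_q\}$ with $\widehat I$ an independent copy. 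Integrating over $a$ against $\Pi$ (or rather recognising the compensation formula / the definition \eqref{renewal} of $U_q$ as the occupation measure of $\zeta$ before $\ee_q$) should produce exactly \eqref{prop2}: the right-hand side $\int_0^\infty k(ye^x)\,U_q(\mathrm{d}x)$ is the density of $e^{-\zeta_\sigma}\widehat I_{\ee_q}$ integrated against the occupation measure, and the left-hand side $\int_y^\infty k(x)\,\mathrm{d}x = \mathbb{P}(I_{\ee_q}>y)$ is its "primitive" counterpart. I would set this up carefully via the identity $\mathbb{P}(I_{\ee_q}>y)=\mathbb{E}\big[\int_0^{\ee_q}\mathbf{1}_{\{\int_0^t e^{-\zeta_s}\mathrm{d}s\, <\, y\,\le\, \int_0^t e^{-\zeta_s}\mathrm{d}s + e^{-\zeta_t}\widehat I\}}\,\cdots\big]$, differentiating in $y$ and using \eqref{renewal}.

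An alternative, perhaps more robust route that also covers $q=0$ uniformly: use the almost-sure identity $I_{\ee_q}=\int_0^{\ee_q}e^{-\zeta_s}\,\mathrm{d}s$ and the change of variables $v=\int_0^s e^{-\zeta_r}\,\mathrm{d}r$, i.e. $s=\tau(v)$, so that on $\{v<I_{\ee_q}\}$ one has $X^{(1)}_v=e^{-\zeta_{\tau(v)}}$ and $\mathrm{d}s=X^{(1)}_v\,\mathrm{d}v$. Then for a test function $g$,
\begin{equation}
\mathbb{E}\!\left[\int_0^{\ee_q} g(e^{-\zeta_s})\,\mathrm{d}s\right]=\int_{[0,\infty)}g(e^{-x})\,U_q(\mathrm{d}x)
\end{equation}
on the one hand, and on the other hand, after the time change, this becomes $\mathbb{E}[\int_0^{I_{\ee_q}} g(X^{(1)}_v)\,X^{(1)}_v\,\mathrm{d}v]$; comparing with $q\,\mathbb{E}[\int_0^\infty g(\text{scaled }\cdot)\,\cdots]$ and using Theorem~\ref{densityq} should collapse to \eqref{prop2} after a further scaling substitution $x\mapsto ye^x$. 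The existence of the density $k$ itself is immediate for $q>0$ from Theorem~\ref{densityq}, and for $q=0$ from the Bertoin--Lindner--Maller result cited in the introduction (or, in the subordinator case, from Carmona et al.).

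The main obstacle I anticipate is bookkeeping the boundary/endpoint contributions and the interchange of differentiation (in $y$) with the expectation and the renewal integral — in particular making sure no atom is lost at $x=0$ in $U_q$ (the drift term $c>0$ of $\zeta$ contributes such a piece, matching the $-cx$ term in \eqref{CPY_main}) and that the equation is asserted only almost everywhere, which is why the statement is phrased with "almost everywhere". A secondary technical point is justifying the time-change/occupation-density manipulation rigorously when $\zeta$ has a nontrivial drift, so that $\tau$ is not strictly increasing in the obvious way; this is handled by the standard Lamperti machinery already invoked in the setup, so I would simply cite it.
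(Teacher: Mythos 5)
Your plan is genuinely different from the paper's argument, which never conditions on the path at all: the paper checks that both sides of (\ref{prop2}), suitably normalised in $y$, have the same entire moments, using the recursion $\mathbb{E}[I_{\ee_q}^n]=\tfrac{n}{\phi(n)+q}\mathbb{E}[I_{\ee_q}^{n-1}]$ together with $\int_{[0,\infty)}e^{-nx}U_q(\mathrm{d}x)=(\phi(n)+q)^{-1}$ (which is exactly (\ref{renewal}) with $f(x)=e^{-nx}$), and concludes by moment determinacy of the integrated-tail density $y\mapsto\int_y^\infty k(x)\mathrm{d}x$. Your Lamperti/Markov route can be made rigorous, and handling existence of $k$ via Theorem \ref{densityq} for $q>0$ and Bertoin--Lindner--Maller for $q=0$ is fine; but as written there is a genuine gap: the one step that actually produces (\ref{prop2}) is never carried out, and the heuristics offered in its place are incorrect. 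In the second route the Jacobian is inverted: $v=\int_0^s e^{-\zeta_r}\mathrm{d}r$ gives $\mathrm{d}s=\mathrm{d}v/X^{(1)}_v$, not $X^{(1)}_v\,\mathrm{d}v$, so the correct time-changed identity is $\int_{[0,\infty)}g(e^{-x})U_q(\mathrm{d}x)=\mathbb{Q}_1\big[\int_0^{T_0}g(X_v)X_v^{-1}\,\mathrm{d}v\big]$, and the factor $X_v^{-1}$ is precisely what makes the argument close. The missing step is then: take $g(w)=k(y/w)$, so the right-hand side of (\ref{prop2}) equals $\int_0^\infty \mathbb{Q}_1\big[X_v^{-1}k(y/X_v)\mathbf{1}_{\{v<T_0\}}\big]\mathrm{d}v$; by the Markov property and the index-$1$ scaling, given $\mathcal{F}_v$ on $\{v<T_0\}$ the residual lifetime $T_0-v$ is distributed as $X_v\widetilde{I}$ with $\widetilde{I}$ an independent copy of $I_{\ee_q}$, hence $\mathbb{Q}_1\big[X_v^{-1}k(y/X_v)\mathbf{1}_{\{v<T_0\}}\big]=k(y+v)$ for a.e.\ $y$, and integrating in $v$ yields $\int_0^\infty k(y+v)\mathrm{d}v=\mathbb{P}(I_{\ee_q}>y)$, i.e.\ (\ref{prop2}). ``Comparing \dots should collapse'' is exactly this identity, and it must be proved; the a.e.\ caveats are best handled by integrating both sides against nonnegative test functions (everything is nonnegative, so Fubini is free), not by differentiating in $y$ under the expectation.

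Your first route, as described, would not give the equation at all: given $\mathcal{F}_t$ on $\{t<\ee_q\}$ one has $I_{\ee_q}=A_t+e^{-\zeta_t}\widehat{I}$ with $A_t=\int_0^t e^{-\zeta_s}\mathrm{d}s$, whose conditional density at $y$ is $e^{\zeta_t}k\big(e^{\zeta_t}(y-A_t)\big)$; so ``the density of $e^{-\zeta}\widehat{I}$ integrated against the occupation measure'' is $\int_{[0,\infty)} e^{x}k(ye^{x})U_q(\mathrm{d}x)$ (and even that ignores the shift by $A_t$), which differs from the right-hand side of (\ref{prop2}) by the Jacobian $e^{x}$. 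Likewise, integrating a first-passage decomposition over levels against $\Pi$ does not produce the renewal measure $U_q$, which is the occupation measure of the killed subordinator, not a quantity built from the L\'evy measure via the compensation formula in the way you suggest. In short: fix the time-change Jacobian, prove the Markov-property identity $\mathbb{Q}_1\big[X_v^{-1}k(y/X_v)\mathbf{1}_{\{v<T_0\}}\big]=k(y+v)$, and drop route 1; alternatively, note that the paper's moment computation sidesteps all conditioning and differentiation issues and is considerably shorter.
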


The next result generalizes (\ref{CPY_main}).  
\begin{theorem}\label{genCPY} Let $q\ge 0$. The  random variable $I_{\ee_q}$ has a density that we denote by $k$, and it solves
\begin{equation}\label{prop1}
(1-cx)k(x) = \int_x^{\infty} \overline{\Pi}(\log (y/x)) k(y)\mathrm{d}y +q\int_x^\infty k(y)\mathrm{d}y\, \,\qquad x\in(0,1/c). 
\end{equation} 
Conversely, if a density on $(0,1/c)$ satisfies this equation then it is the density of $I_{\ee_q}$.
\end{theorem}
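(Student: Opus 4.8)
The plan is to feed the explicit structure of the renewal measure $U_q$ into the integral equation (\ref{prop2}) of the preceding Proposition; existence of the density $k$ was already established there, so only the equation and the converse are at stake. Two preliminary remarks. Since $\zeta$ has drift $c$ one has $\zeta_t\geq ct$, hence $I_{\ee_q}=\int_0^{\ee_q}e^{-\zeta_t}\,\mathrm{d}t\leq\int_0^{\ee_q}e^{-ct}\,\mathrm{d}t<1/c$, so $k$ is carried by $(0,1/c)$; moreover $I_{\ee_q}\leq\ee_q$, so all positive moments of $I_{\ee_q}$ are finite. Secondly, writing $\phi(\lambda)=c\lambda+\int_{(0,\infty)}(1-e^{-\lambda x})\Pi(\mathrm{d}x)$ for the Laplace exponent of $\zeta$, formula (\ref{renewal}) applied to $f(x)=e^{-\lambda x}$ gives $\int_{[0,\infty)}e^{-\lambda x}U_q(\mathrm{d}x)=(q+\phi(\lambda))^{-1}$; since $\phi(\lambda)=c\lambda+\lambda\int_0^\infty e^{-\lambda r}\overline{\Pi}(r)\,\mathrm{d}r$, inverting Laplace transforms yields the resolvent identity
\[
\mathrm{d}x \;=\; c\,U_q(\mathrm{d}x)\;+\;q\,U_q([0,x])\,\mathrm{d}x\;+\;\Big(\int_{[0,x]}\overline{\Pi}(x-s)\,U_q(\mathrm{d}s)\Big)\mathrm{d}x,\qquad x\geq 0 .
\]

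For the first assertion I would integrate the function $x\mapsto k(ye^{x})$ against both sides of this identity over $[0,\infty)$, using (\ref{prop2}) to evaluate $\int_{[0,\infty)}k(ye^{x})U_q(\mathrm{d}x)=\int_y^\infty k(z)\,\mathrm{d}z$. This produces $c\int_y^\infty k(z)\,\mathrm{d}z = A(y)-B(y)-C(y)$, where $A(y)=\int_0^\infty k(ye^{x})\,\mathrm{d}x$, $B(y)=q\int_0^\infty k(ye^{x})U_q([0,x])\,\mathrm{d}x$ and $C(y)=\int_0^\infty k(ye^{x})\big(\int_{[0,x]}\overline{\Pi}(x-s)U_q(\mathrm{d}s)\big)\mathrm{d}x$. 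The change of variable $z=ye^{x}$ turns $A$ and $B$ into $\int_y^\infty k(z)\,\mathrm{d}z/z$ and $q\int_{[0,\infty)}U_q(\mathrm{d}s)\int_{ye^{s}}^\infty k(z)\,\mathrm{d}z/z$; in $C$, the substitution $x=s+t$, Tonelli, and a second application of (\ref{prop2}) at the point $ye^{t}$ give $C(y)=\int_0^\infty\overline{\Pi}(t)\big(\int_{ye^{t}}^\infty k(z)\,\mathrm{d}z\big)\mathrm{d}t$. Differentiating the resulting identity in $y$ — using $\tfrac{\mathrm{d}}{\mathrm{d}y}\int_{ye^{s}}^\infty k(z)\,\mathrm{d}z/z=-k(ye^{s})/y$ and $\tfrac{\mathrm{d}}{\mathrm{d}y}\int_{ye^{t}}^\infty k(z)\,\mathrm{d}z=-e^{t}k(ye^{t})$ — a further application of (\ref{prop2}) collapses the $U_q$-integral arising from $B$, and after multiplying by $-y$ and substituting $z=ye^{t}$ in the $\overline{\Pi}$-term one is left precisely with (\ref{prop1}) on $(0,1/c)$. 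When $c=0$ the left-hand side above vanishes and the same computation still yields (\ref{prop1}); the differentiations cause no trouble since $k$ is completely monotone, hence smooth, by Corollary \ref{coro1}.

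For the converse I would pass to moments. If $k$ is any density on $(0,1/c)$ solving (\ref{prop1}), then multiplying (\ref{prop1}) by $x^{s-1}$, integrating, and using Tonelli together with $\int_0^\infty e^{-sr}\overline{\Pi}(r)\,\mathrm{d}r=(\phi(s)-cs)/s$ shows that $M_k(s):=\int_0^\infty x^{s-1}k(x)\,\mathrm{d}x$ obeys $M_k(s+1)=sM_k(s)/(q+\phi(s))$ for $\mathrm{Re}(s)\geq 1$ (when $c=0$ one first checks, via this relation and monotone convergence, that these integrals are finite). Together with $M_k(1)=1$ this forces $\int_0^\infty x^{n}k(x)\,\mathrm{d}x = n!\big/\prod_{j=1}^{n}(q+\phi(j))$ for every integer $n\geq 0$, and the same recursion read off from (\ref{prop2}) identifies the right-hand side as $\mathbb{E}[I_{\ee_q}^{\,n}]$, so $k$ and the law of $I_{\ee_q}$ have the same moments. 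Since $\mathbb{E}[I_{\ee_q}^{\,2n}]\leq(2n)!/(q+\phi(1))^{2n}$, Carleman's condition holds and the law of $I_{\ee_q}$ is moment-determinate (trivially so when $c>0$, where it has bounded support), whence $k$ is the density of $I_{\ee_q}$.

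The work is almost entirely bookkeeping: justifying the Tonelli interchanges and the changes of variable, differentiating under the integral sign, and — the point I would be most careful about — handling the atom of $U_q$ at the origin in the resolvent identity (present exactly in the compound-Poisson case $c=0$) and making sure the Mellin-type recursion is valid on a full right half-plane, not merely at integer arguments, before invoking moment-determinacy. As an alternative one can obtain both implications at once by noting that (\ref{prop2}) and (\ref{prop1}) each transform, under $k\mapsto M_k$, into the single functional equation $M_k(s+1)=sM_k(s)/(q+\phi(s))$, and then appealing to injectivity of the Mellin transform on $L^{1}$ together with moment-determinacy, which bypasses the differentiation step entirely.
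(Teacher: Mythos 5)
Your forward argument takes a genuinely different route from the paper's. The paper starts from the known existence of a density $h$ and the moment formula $\mathbb{E}[I_{\ee_q}^{n}]=n!/\prod_{i=1}^{n}(q+\phi(i))$, defines the candidate $\widetilde{h}(x)=cxh(x)+\int_x^{\infty}\overline{\Pi}(\log(y/x))h(y)\mathrm{d}y+q\int_x^{\infty}h(y)\mathrm{d}y$, verifies by direct computation (using (\ref{prop2}) and $\int e^{-\theta y}U_q(\mathrm{d}y)=(\phi(\theta)+q)^{-1}$) that $\widetilde{h}$ has exactly these moments, concludes $h=\widetilde{h}$ a.e.\ by moment determinacy, and then explicitly modifies $h$ on the exceptional null set $\mathcal{N}$ so that the equation holds for \emph{every} $x\in(0,1/c)$. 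You instead convolve (\ref{prop2}) with the resolvent identity $\mathrm{d}x=c\,U_q(\mathrm{d}x)+q\,U_q([0,x])\,\mathrm{d}x+\big(\int_{[0,x]}\overline{\Pi}(x-s)U_q(\mathrm{d}s)\big)\mathrm{d}x$ and differentiate; that identity is correct, the bookkeeping you describe does lead to (\ref{prop1}), and your converse (the Mellin/moment recursion $M_k(s+1)=sM_k(s)/(q+\phi(s))$ plus Carleman or bounded support) is essentially the paper's converse. This route has the merit of deriving the equation directly from the renewal equation rather than guessing the answer and matching moments.

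The genuine gap is in how you justify the differentiation and the pointwise validity: you invoke Corollary \ref{coro1} to claim $k$ is completely monotone, hence smooth. That corollary concerns $q>0$ and $\xi$ itself a subordinator, i.e.\ $\int_0^{\ee_q}e^{\xi_s}\mathrm{d}s$ with \emph{increasing} $\xi$; Theorem \ref{genCPY} is about $\xi=-\zeta$, and the density of $\int_0^{\ee_q}e^{-\zeta_s}\mathrm{d}s$ is in general not completely monotone (in Example 2, $k(x)\propto x^{(s-a)/a}e^{-(\beta x)^{1/a}}$ vanishes at $0$ when $s>a$), nor does the corollary cover $q=0$ at all. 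Without that smoothness your differentiation argument can still be made rigorous (each term is absolutely continuous in $y$ by Fubini, so one can differentiate a.e.), but it then delivers (\ref{prop1}) only for Lebesgue-almost every $x$, whereas the theorem — and the paper's proof, via the redefinition of $k$ as $(1-cx)^{-1}\big(\int_x^{\infty}\overline{\Pi}(\log(y/x))h(y)\mathrm{d}y+q\int_x^{\infty}h(y)\mathrm{d}y\big)$ on $\mathcal{N}$ together with a check that the modified function still satisfies the equation there — asserts it for every $x\in(0,1/c)$. So you need to replace the appeal to complete monotonicity by an a.e.-differentiation argument plus this version-fixing step; as written, the upgrade from ``a.e.''\ to ``everywhere'' rests on a false premise.
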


The importance of the above result will be illustrated in Theorem 2.5 where  we study the asymptotic behaviour at $0$ of the density $k$, and in Section 4 where we provide some examples where $k$ can be computed explicitly. Further applications have been provided in Haas \cite{Ha} and Haas and Rivero \cite{HaR} where this equation
has been used to estimate the right tail behavior of the law of $I$ and to study the
maximum domain of attraction of $I.$

The following corollary  is another important application of equation (\ref{prop1}). In parti\-cular, it says that if we know  the density of the exponential functional of the negative of a subordinator, say $k$, 
then for $\rho\ge 0$, $x^\rho k(x)$  adequately normalized is the density of the  exponential functional associated to the negative of a new subordinator. The proof of this fact follows easily by multiplying in both sides of equation (\ref{prop1}) by $x^\rho$. Such result also appears in Chazal et al. \cite{CKP} but in terms of the distribution of $I_{\ee_q}$ not in terms of its density. 

 \begin{corollary} Let $q\ge 0,\rho> 0$, $c_\rho$ a positive constant satisfying
\[
c_\rho=\int_{(0,\infty)}x^{\rho}k(x)\mathrm{d} x, 
\]
and suppose that when $q>0$ the renewal measure $U_q$ has a density. Then the function $h(x):=c^{-1}_{\rho}x^\rho k(x)$ is the density of the exponential functional of the negative of a subordinator whose Laplace exponent is given by 
\begin{equation}\label{trans}
\phi_{\rho}(\lambda)=\frac{\lambda}{\lambda+\rho}\Big(\phi(\lambda +\rho) +q\Big).
\end{equation}
Moreover, the density $h$ solves the equation 
\begin{equation}\label{cor2}
(1-cx)h(x) = \int_x^{\infty} \overline{\Pi}_\rho(\log y/x) h(y)\mathrm{d}y  \, \,\qquad x\in(0,1/c),
\end{equation} 
where $\overline{\Pi}_\rho(z)=\overline{\Pi}(z)e^{-\rho z}+qe^{-\rho z}$.
\end{corollary}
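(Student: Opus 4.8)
The plan is to derive the corollary directly from Theorem~\ref{genCPY} by a change of measure on the level of the integral equation (\ref{prop1}), and then recognise the resulting equation as an instance of (\ref{CPY_main}) for a new subordinator. First I would multiply both sides of (\ref{prop1}) by $x^{\rho}$. On the left this gives $(1-cx)x^{\rho}k(x) = (1-cx)h(x)c_{\rho}$. On the right, inside the first integral I would write $x^{\rho} = (y/x)^{-\rho} y^{\rho}$, so that $x^{\rho}\overline{\Pi}(\log(y/x)) = \overline{\Pi}(\log(y/x))e^{-\rho\log(y/x)} y^{\rho}$, and similarly $x^{\rho} q = q e^{-\rho\log(y/x)} y^{\rho}$ in the second integral. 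Collecting terms and dividing by $c_{\rho}$ yields exactly (\ref{cor2}) with $\overline{\Pi}_{\rho}(z) = \overline{\Pi}(z)e^{-\rho z} + q e^{-\rho z}$, provided $h$ is integrable, which holds because $c_{\rho}=\int x^{\rho}k(x)\,\mathrm{d}x<\infty$ is assumed finite. This already establishes the last displayed equation of the corollary.

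Next I would check that $\overline{\Pi}_{\rho}$ is genuinely the tail of a L\'evy measure $\Pi_{\rho}$ of a subordinator, and that the drift of the new subordinator is still $c$. The function $z\mapsto \overline{\Pi}(z)e^{-\rho z}+qe^{-\rho z}$ is non-increasing, right-continuous, and tends to $0$ at $\infty$; one must verify the integrability condition $\int_{(0,\infty)}(1\wedge z)\Pi_{\rho}(\mathrm{d}z)<\infty$, which follows since $\overline{\Pi}_{\rho}\le \overline{\Pi}+q$ near $0$ and decays exponentially at $\infty$. Then I would compute the associated Laplace exponent: using $\phi(\lambda)=c\lambda+\int_{(0,\infty)}(1-e^{-\lambda z})\Pi(\mathrm{d}z) = c\lambda + \lambda\int_0^\infty e^{-\lambda z}\overline{\Pi}(z)\,\mathrm{d}z$, the exponent built from tail $\overline{\Pi}_{\rho}$ and drift $c$ is $c\lambda + \lambda\int_0^\infty e^{-\lambda z}\big(\overline{\Pi}(z)e^{-\rho z}+qe^{-\rho z}\big)\mathrm{d}z = c\lambda + \tfrac{\lambda}{\lambda+\rho}\big(\phi(\lambda+\rho)-c(\lambda+\rho)+q\big)$, and a short simplification gives $\tfrac{\lambda}{\lambda+\rho}(\phi(\lambda+\rho)+q) = \phi_{\rho}(\lambda)$, matching (\ref{trans}).

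Finally, to conclude that $h$ is actually the density of the exponential functional of $-\zeta_{\rho}$ (rather than merely a solution of the equation), I would invoke the converse part of Theorem~\ref{genCPY}: since $h$ is a probability density on $(0,1/c)$ satisfying (\ref{cor2}), which is precisely (\ref{prop1}) written for the subordinator with L\'evy tail $\overline{\Pi}_{\rho}$, drift $c$, and killing rate $0$, the uniqueness statement identifies $h$ with the density of that exponential functional. Here one should note that (\ref{cor2}) has no ``$q$'' term, reflecting that the new object is an exponential functional up to an independent exponential time of parameter $0$, i.e.\ up to $\infty$; the original killing rate $q$ has been absorbed into the L\'evy measure $\Pi_{\rho}$, which is consistent with $\phi_{\rho}(0+)=0$ so that $-\zeta_{\rho}$ indeed drifts to $-\infty$ and $I$ is a.s.\ finite.

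The main obstacle I anticipate is the bookkeeping in the converse direction when $q>0$: Theorem~\ref{genCPY}'s converse is stated for densities on $(0,1/c)$, and one must make sure that the hypothesis ``$U_q$ has a density when $q>0$'' is exactly what is needed to guarantee $I_{\ee_q}$ has a density $k$ in the first place (so that $c_\rho$ and $h$ make sense) and that the new subordinator's exponential functional has a density too. Everything else is the routine algebra of the change of variables $x^{\rho}=(y/x)^{-\rho}y^{\rho}$ inside the integrals and the Laplace-transform identity for $\phi_{\rho}$.
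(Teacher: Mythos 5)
Your proposal is correct and follows exactly the route the paper indicates: the paper's own justification is simply to multiply both sides of (\ref{prop1}) by $x^{\rho}$, rewrite $x^{\rho}=y^{\rho}e^{-\rho\log(y/x)}$ inside the integrals to obtain (\ref{cor2}), identify $\overline{\Pi}_{\rho}$ and the Laplace exponent (\ref{trans}), and conclude via the converse part of Theorem~\ref{genCPY}. Your write-up just makes these steps (tail/integrability check, Laplace transform computation, uniqueness) explicit, so it matches the paper's argument in substance.
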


We remark that the transformation studied in Chazal et al. \cite{CKP} is more general than  the one presented in (\ref{trans}) and  that they  applied such transformation to L\'evy processes with one-sided jumps.  We also remark that  the subordinator whose Laplace exponent is given by $\phi_\rho$ has an infinite lifetime in any case.\\

Our next goal is to study the behavior of the density of $I_{\ee_{q}}$ near $0$.  When $q=0,$ we work with the following assumption:\\

\noindent{\bf (A)} {\it The L\'evy measure $\Pi$ belongs to the class $\mathcal{L}_{\alpha}$ for some $\alpha\geq 0,$ that is to say that the tail L\'evy measure $\overline{\Pi}$ satisfies 
\begin{equation}\label{conv_eq_def} 
\lim_{x \to \infty} \frac{\overline{\Pi}(x+y)}{\overline{\Pi}(x)} = e^{-\alpha y}, \quad \mbox{for all $y \in \mathbb{R}$.}
\end{equation}
}
Observe that  regularly varying and subexponential tail L\'evy measures satisfy this assumption with $\alpha=0$ and that convolution equivalent L\'evy measures are examples of L\'evy measures satisfying (\ref{conv_eq_def}) for some index $\alpha>0.$ 

\begin{theorem} Let $q\geq 0$ and $\xi=-\zeta$, where $\zeta$ is a subordinator such that when $q=0$ the L\'evy measure $\Pi$ satisfies assumption (A).  The following asymptotic behaviour holds for the density function $k$ of the exponential functional $I_{\ee_{q}}.$
\begin{itemize}
\item[i)] If $q>0,$ then $$k(x)\xrightarrow[]{} q \qquad \mbox{ as \,$x\downarrow 0$}.$$ 
\item[ii)] If $q=0,$ then $\mathbb{E}[I^{-\alpha}]<\infty$ and
\[ k(x) \sim \mathbb{E}\big[I^{-\alpha}\big] \overline{\Pi}(\log 1/x) \quad \mbox{as \,$x \downarrow 0$.} \]
\end{itemize}

\end{theorem}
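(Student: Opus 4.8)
The plan is to treat the two regimes separately, using the integral equation \eqref{prop1} as the main engine in both.

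For part (i), when $q>0$, I would start from \eqref{prop1} and isolate the behaviour as $x\downarrow 0$. Since $\zeta$ is a subordinator, $\int_x^\infty k(y)\,\mathrm{d}y \to \int_0^\infty k(y)\,\mathrm{d}y = 1$ as $x\downarrow 0$, so the term $q\int_x^\infty k(y)\,\mathrm{d}y$ converges to $q$. The factor $(1-cx)\to 1$. It therefore suffices to show that the convolution-type term $\int_x^\infty \overline{\Pi}(\log(y/x))k(y)\,\mathrm{d}y$ tends to $0$ as $x\downarrow 0$. Substituting $y=xe^u$ this becomes $\int_0^\infty \overline{\Pi}(u)\,k(xe^u)xe^u\,\mathrm{d}u$; alternatively I can compare with the renewal-equation form \eqref{prop2}, writing $\int_0^\infty \overline{\Pi}(\log(y/x))k(y)\,\mathrm{d}y = \int_0^\infty \bigl(U_q(\mathrm{d}u)-c\,\delta_0(\mathrm{d}u)\bigr)\cdots$ — more cleanly, note that by \eqref{prop1} the quantity $(1-cx)k(x)$ is bounded (it is a difference of things bounded by $1$ and by $q$), hence $k$ is bounded near $0$, say by a constant $M$ on $(0,\eps)$. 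Then on that interval $\int_x^\infty \overline{\Pi}(\log(y/x))k(y)\,\mathrm{d}y \le M\int_x^\eps \overline{\Pi}(\log(y/x))\,\mathrm{d}y + \sup_{y\ge\eps}k(y)\cdot\int_\eps^\infty\overline{\Pi}(\log(y/x))\,\mathrm{d}y$, and both pieces vanish as $x\downarrow 0$: the first because $\int_x^\eps\overline{\Pi}(\log(y/x))\,\mathrm{d}y = x\int_0^{\log(\eps/x)}\overline{\Pi}(u)e^u\,\mathrm{d}u$ and $\int_0^\infty(1\wedge u)\overline{\Pi}(u)\,\mathrm{d}u<\infty$ forces this to be $o(1)$, the second by dominated convergence since $\overline{\Pi}(\log(y/x))\downarrow 0$ pointwise for each fixed $y$. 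This gives $k(x)\to q$. (In fact Corollary 2.2 already delivers this when one identifies $k=h$, so an alternative short route for (i) is simply to invoke Corollary 2.2.)

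For part (ii), with $q=0$ and $\Pi\in\mathcal{L}_\alpha$, the equation \eqref{prop1} reads $(1-cx)k(x)=\int_x^\infty \overline{\Pi}(\log(y/x))k(y)\,\mathrm{d}y$. I would again substitute $y=xe^u$ to obtain $(1-cx)k(x)=\int_0^\infty \overline{\Pi}(u)\,k(xe^u)\,xe^u\,\mathrm{d}u$, and then divide by $\overline{\Pi}(\log(1/x))$. The heuristic is that as $x\downarrow 0$, $k(xe^u)xe^u$ should behave like the "density of $I$ near $0$ weighted by the scaling", and the membership $\Pi\in\mathcal{L}_\alpha$ means $\overline{\Pi}(u + \log(1/x))/\overline{\Pi}(\log(1/x))\to e^{-\alpha u}$; however here the shift sits inside $\overline{\Pi}(u)$ with $u$ the integration variable, so the right way is to renormalise: write $\overline{\Pi}(\log(y/x)) = \overline{\Pi}(\log(1/x)+\log y)$ and use $\Pi\in\mathcal{L}_\alpha$ to get $\overline{\Pi}(\log(y/x))/\overline{\Pi}(\log(1/x))\to y^{-\alpha}$ for each fixed $y>0$. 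Then, provided one can justify passing the limit through the integral, $(1-cx)k(x)/\overline{\Pi}(\log(1/x)) \to \int_0^\infty y^{-\alpha}k(y)\,\mathrm{d}y = \mathbb{E}[I^{-\alpha}]$, which is the claim once we also check this integral is finite. Finiteness of $\mathbb{E}[I^{-\alpha}]$ I would establish independently: by Bertoin–Yor type formulae the negative moments of $I$ are controlled by $\mathbb{E}[\zeta_1]$ or by the Mellin transform of $I$, which for the exponential functional of $-\zeta$ is $\mathbb{E}[I^{s-1}]=\phi(s-1)\cdots$ via the recursion $\mathbb{E}[I^s]=\tfrac{s}{\phi(s)}\mathbb{E}[I^{s-1}]$ type identity; in particular for $s\in(-\alpha,0]$ this is finite because $\Pi\in\mathcal{L}_\alpha$ implies $\int_1^\infty e^{\alpha x}\Pi(\mathrm{d}x)<\infty$, which is exactly the integrability needed to push the Laplace/Mellin exponent $\phi$ analytically to the left of $0$ up to $-\alpha$. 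Alternatively, and perhaps more transparently, I would bound $\int_0^1 y^{-\alpha}k(y)\,\mathrm{d}y$ directly using the already-established rough bound on $k$ near $0$ coming from \eqref{prop1}.

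The main obstacle is the uniform domination needed to exchange limit and integral in part (ii): one must control $k(y)\overline{\Pi}(\log(y/x))/\overline{\Pi}(\log(1/x))$ uniformly in $x$ near $0$, both for $y$ near $0$ (where $k$ may blow up — but is tamed by the equation, giving $k(y)=O(\overline{\Pi}(\log(1/y)))$ or better, compatible with $y^{-\alpha}$-integrability against $\overline{\Pi}$) and for $y$ large (where one needs the ratio $\overline{\Pi}(\log(y/x))/\overline{\Pi}(\log(1/x))$ to be dominated by an integrable-against-$k$ function of $y$, which is where one invokes the standard "Potter-type" bounds available for the class $\mathcal{L}_\alpha$, namely that $\overline{\Pi}(x+y)/\overline{\Pi}(x)\le C e^{-(\alpha-\eps)y}$ for $y\ge 0$, $x$ large, together with $\mathbb{E}[I^{-(\alpha-\eps)}]<\infty$). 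Assembling these bounds and invoking dominated convergence is the technical heart; everything else is bookkeeping around \eqref{prop1}.
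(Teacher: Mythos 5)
Your overall shape (work from (\ref{prop1}), divide by $\overline{\Pi}(\log 1/x)$, use the class $\mathcal{L}_\alpha$ pointwise and then control the two regions of the integral) is the paper's shape too, but the proposal is missing precisely the ingredient that makes the paper's proof work: an a priori growth bound for $k$ near $0$, which the paper establishes in its Step 2 ($k(x)\le C x^{p(\alpha)}$ with $\alpha-1<p(\alpha)<\alpha$ when $q=0$, and boundedness of $k$ near $0$ when $q>0$) by a nontrivial maximum--sequence contradiction argument applied to the rescaled equation. In your part (i) you replace this by the claim that $(1-cx)k(x)$ is bounded because it is ``a difference of things bounded by $1$ and by $q$''; this is false in general, since the convolution term $\int_x^\infty\overline{\Pi}(\log(y/x))k(y)\,\mathrm{d}y$ is not bounded by $1$ (for any infinite-activity subordinator $\overline{\Pi}(0+)=\infty$), and boundedness of $k$ near $0$ is exactly what has to be proved. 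The fallback ``invoke Corollary \ref{coro1}'' does not apply either: that corollary is stated and proved for $\xi$ a subordinator, i.e. for $\int_0^{\ee_q}e^{\zeta_s}\mathrm{d}s$, and its proof uses $\mathbb{E}[e^{-p\xi_1}]\le 1$; here $\xi=-\zeta$, a different functional, and the argument does not transfer. (Two smaller points in (i): $\sup_{y\ge\eps}k(y)$ can be infinite -- e.g. the pure-drift example $k(x)=q(1-cx)^{q/c-1}$ with $q<c$ -- so the second piece should be bounded by $\overline{\Pi}(\log(\eps/x))$ as in the paper; and your first piece is $O(\eps)$, not $o(1)$, for fixed $\eps$, so one must let $\eps\to0$ afterwards, or split at $x\delta$ as the paper does.) In part (ii) the same gap reappears in circular form: to dominate the region $y\in(x,\eps)$ you invoke $k(y)=O(\overline{\Pi}(\log(1/y)))$, which is, up to the constant, the statement being proved; Potter-type bounds and uniform convergence are of no help there because $\log(y/x)$ stays bounded while $\overline{\Pi}(\log(1/x))\to0$. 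The paper closes this with the weaker polynomial bound of Step 2 plus Karamata's theorem, showing that region contributes $O(\eps^{p(\alpha)+1-\alpha})$.

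There is also a genuine error in your argument for $\mathbb{E}[I^{-\alpha}]<\infty$: membership of $\Pi$ in $\mathcal{L}_\alpha$ does \emph{not} imply $\int_1^\infty e^{\alpha x}\Pi(\mathrm{d}x)<\infty$ (take $\overline{\Pi}(x)=e^{-\alpha x}$); it only gives $\int_1^\infty e^{\beta x}\Pi(\mathrm{d}x)<\infty$ for $\beta<\alpha$, cf. (\ref{KvS20}). Hence the Mellin recursion extends $\phi$ only to $(-\alpha,\infty)$ and, as you yourself state it (``for $s\in(-\alpha,0]$''), never reaches the endpoint moment $\mathbb{E}[I^{-\alpha}]$, which is exactly the constant in the asymptotics. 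The paper reaches the endpoint by running the recursion (\ref{KvS5}) in unit steps, so that $\phi$ is only evaluated at arguments strictly greater than $-\alpha$, and by supplying the fractional part $\alpha'\in[0,1)$ via the Bertoin--Yor factorization $IR\stackrel{d}{=}\mathbf{e}$, which needs no assumption on $\Pi$. The remaining elements of your plan (Fatou for the lower bound, uniform convergence or Potter bounds on $[\eps,\infty)$, the $x\delta$-splitting for $q>0$) do correspond to the paper's Step 3 and are sound once the Step 2 bound is available; without it, the proposal does not constitute a proof.
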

In the sequel we will assume that $q=0.$ The above result will help us to describe  the behaviour at $\infty$ of the density of the exponential functional of a particular spectrally negative L\'evy processes associated to the subordinator $\zeta$. In order to explain such relation, we need the following  assumptions. Assume  that $U$, the renewal measure of the subordinator $\zeta$, is absolutely continuous with respect to the Lebesgue measure with density  $u$ which is non-increasing and convex. We also suppose that $\mathbb{E}[\zeta_1]<\infty$. According to Theorem 2 in Kyprianou and Rivero  \cite{KyR} there exists a spectrally negative L\'evy process $Y=(Y_t,t\ge 0)$ that drifts to $+\infty$, whose Laplace exponent is described by 
\[
\psi(\lambda)=\lambda\phi^*(\lambda)=\frac{\lambda^2}{\phi(\lambda)},\qquad \textrm{for }\quad \lambda\ge 0,
\]
where $\phi^*$ is the Laplace exponent of another subordinator and satisfies
\[
\phi^*(\lambda):=q^*+c^*\lambda+\int_{(0,\infty)}\Big(1-e^{-\lambda x}\Big)\Pi^*(\mathrm{d}x),
\]
where 
\[
q^*=\left(c+\int_{(0,\infty)}x\Pi(\mathrm{d}x)\right)^{-1}, \qquad
c^*=\left\{\begin{array}{ll}
0&\textrm{$c>0$, or $\Pi(0,\infty)=\infty$, }\\
1/\Pi(0,\infty), & \textrm{$c=0$ and $\Pi(0,\infty)<\infty$, }
\end{array}\right .
\]
and the L\'evy measure $\Pi^*$ satisfies
\[
u(x)=c^*\mathbf{1}_{\{x=0\}}+q^*+\overline{\Pi}^*(x), \qquad \textrm{for }\quad x\ge 0.
\]
Let $I_\psi$ be the exponential functional associated to $-Y$, i.e.
\[
I_\psi=\int_0^\infty e^{-Y_s}\mathrm{d} s,
\]
and  denote its density by $k_{\psi}$. From the proof of Proposition 4 in Rivero \cite{Riv}  the density $k_{\psi}$ satisfies
\begin{equation}\label{expfuncsn}
k_\psi(x)=q^*\frac{1}{x}k\left(\frac{1}{x}\right), \qquad \textrm{for }\quad x>0.
\end{equation}
The following corollary give us the asymptotic behaviour at $\infty$ of the density of the exponential functional of $-Y$.
 \begin{corollary}  Suppose that $\zeta$ is a subordinator  satisfying assumption (A) and such that its renewal measure  has a density  which is non-increasing and convex and  let $Y$ be its associated spectrally negative L\'evy process  defined as above. Then  the following asymptotic behaviour holds for the density function $k_\psi$,
\[ k_\psi(x) \sim q^* \mathbb{E}\big[I^{-\alpha}\big]\frac{1}{x} \overline{\Pi}(\log x) \quad \mbox{as $x \to \infty$.} \]
\end{corollary}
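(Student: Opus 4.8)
\emph{Proof plan.} The statement is essentially a change of variables $x\mapsto 1/x$ applied to an asymptotic we already have, so I would obtain it directly from the identity (\ref{expfuncsn}) together with part (ii) of the preceding theorem. First I would check that the hypotheses of the corollary are precisely those required to invoke the two ingredients. The assumptions that $\mathbb{E}[\zeta_1]<\infty$ and that the renewal measure $U$ of $\zeta$ has a density $u$ which is non-increasing and convex are exactly the conditions under which Theorem 2 of Kyprianou and Rivero \cite{KyR} produces the spectrally negative L\'evy process $Y$ drifting to $+\infty$ with Laplace exponent $\psi(\lambda)=\lambda^2/\phi(\lambda)$, and under which the identity (\ref{expfuncsn}), taken from the proof of Proposition 4 in Rivero \cite{Riv}, holds; namely
\[
k_\psi(x)=q^*\,\frac{1}{x}\,k\!\left(\frac{1}{x}\right),\qquad x>0 .
\]
Since moreover $\zeta$ is assumed to satisfy (A), i.e. its L\'evy measure $\Pi$ belongs to $\mathcal{L}_\alpha$, part (ii) of the preceding theorem applies in the case $q=0$ and gives both $\mathbb{E}[I^{-\alpha}]<\infty$ and
\[
k(y)\sim \mathbb{E}\big[I^{-\alpha}\big]\,\overline{\Pi}\big(\log(1/y)\big)\qquad\text{as }y\downarrow 0 .
\]

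Next I would combine these. As $x\to\infty$ the argument $1/x$ tends to $0$ from above, so substituting $y=1/x$ in the displayed asymptotic for $k$ and noting $\log(1/(1/x))=\log x$ yields $k(1/x)\sim \mathbb{E}[I^{-\alpha}]\,\overline{\Pi}(\log x)$ as $x\to\infty$. Plugging this into (\ref{expfuncsn}) and keeping the deterministic factor $q^*/x$ gives
\[
k_\psi(x)=q^*\,\frac{1}{x}\,k\!\left(\frac{1}{x}\right)\sim q^*\,\mathbb{E}\big[I^{-\alpha}\big]\,\frac{1}{x}\,\overline{\Pi}(\log x)\qquad\text{as }x\to\infty ,
\]
which is the claimed equivalence. (One should remark in passing that $\overline{\Pi}(\log x)$ is eventually finite and positive for large $x$, so the ratio $k_\psi(x)\big/\big(q^*\mathbb{E}[I^{-\alpha}]x^{-1}\overline{\Pi}(\log x)\big)$ is well defined and tends to $1$.)

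I do not expect any genuine analytic obstacle here: the whole content is already packaged in Theorem 2.5(ii) and in (\ref{expfuncsn}). The only point needing a little care is the bookkeeping just described, namely verifying that the hypotheses of the corollary are simultaneously sufficient for the Kyprianou–Rivero construction of $Y$, for the validity of the Rivero identity (\ref{expfuncsn}), and for the applicability of part (ii) of the preceding theorem with $q=0$; once this is in place, the conclusion follows by the elementary substitution $x\mapsto 1/x$.
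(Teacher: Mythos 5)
Your proposal is correct and follows exactly the route the paper intends: the corollary is an immediate consequence of the identity (\ref{expfuncsn}) combined with part (ii) of the preceding theorem (applied with $q=0$ under assumption (A)), via the substitution $y=1/x$; indeed the paper gives no separate proof for this reason. Your additional bookkeeping about the hypotheses (finiteness of $\mathbb{E}[\zeta_1]$, non-increasing convex renewal density for the Kyprianou--Rivero construction and the validity of (\ref{expfuncsn})) matches the paper's setup.
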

\section{Proofs}
\setcounter{equation}{0}
\begin{proof}[Proof of Theorem \ref{densityq}]
We  start the proof by showing that the function 
\[
h(t,x):=q\mathbb{Q}_{x}\left[\frac{1}{X_{t}}\mathbf{1}_{\{t<T_0\}}\right],\qquad t\geq 0,\ x>0
\]
is such that 
\begin{equation}\label{den}
\int^{\infty}_{0} h(t,x)\, \mathrm{d} t=1,\qquad \textrm{ for}\quad x>0.
\end{equation}
Then the result  follows from  the identity (\ref{den}) and the fact that 
\[
h(t+s)=q\mathbb{Q}_{1}\Big[h(s,X_{t})\mathbf{1}_{\{t<T_{0}\}}\Big],\qquad\textrm{for}\quad s,t\geq 0,
\]
which is a straightforward consequence of the  Markov property. 

Let us prove   (\ref{den}).  From the definition of $X$ and  the change of variables  $u=\tau(t/x)$, which implies that $ \mathrm{d}u=x^{-1}\exp\{-\xi^\dag_{\tau(t/x)}\} \mathrm{d}t,$ we get
\begin{equation*}
\begin{split}
\int^{\infty}_{0} h(t,x)\mathrm{d}t&=q\int^{\infty}_{0}\,\mathrm{d}t\,\mathbb{E}\bigg[x^{-1}\exp\Big\{-\xi^\dag_{\tau(t/x)}\Big\}\mathbf{1}_{\{\tau(t/x)<\infty\}}\bigg]\\
&=q\mathbb{E}\left[\int^{\infty}_{0}x^{-1}\exp\Big\{-\xi^{\dag}_{\tau(t/x)}\Big\}\mathbf{1}_{\big\{t\leq x\int^{\beta}_{0}e^{\xi^\dag_{s}}\mathrm{d}s\big\}}\mathrm{d}t\right]\\
&=q\mathbb{E}\left[\int^{\infty}_{0}\mathbf{1}_{\{u\leq\beta\}}\mathrm{d}u\right]=q\mathbb{E}(\beta)=1.
\end{split}
\end{equation*}
We now prove that 
\[
\int^{\infty}_{t}h(s)\,\mathrm{d}s=\mathbb{P}\Big(I_{\ee_q}>t\Big),\qquad t>0.
\]
 Indeed, let $t>0$ making a change of variables, using the semi-group property, and Fubini's theorem we have 
 \[
 \int^{\infty}_{t}h(s)\,\mathrm{d}s=\int^{\infty}_{0}h(s+t,1)\,\mathrm{d}s=\mathbb{Q}_{1}\left[\left(\int^{\infty}_{0}h(s,X_{t})\,\mathrm{d}s\right)\mathbf{1}_{\{t<T_{0}\}}\right]=\mathbb{Q}_{1}(t<T_{0}).
 \] 
 The result follows from the identity
$\mathbb{Q}_{1}(t<T_{0})=\mathbb{P}\Big(I_{\ee_q}>t\Big).$
\end{proof}
\begin{proof}[Proof of Corollary \ref{coro1}] Here, we use the same notation as above and we  follow similar arguments as in the proofs of Lemma 5 and Proposition 1 in \cite{Bertoin01}.  We first prove that for every $0\le t<T_0$ and $p>0$, the variable
\[
X_t^{p}\int_t^{T_0}\frac{1} {X^{p+1}_s}\mathrm{d} s 
\]
is independent from $\sigma\{X_s, 0\le s\le t\}$ and is distributed as 
\[
\int_0^{\ee_q}e^{-p\xi_s}\mathrm{d} s.
\]
As a consequence of the Markov property at time $t$, we only need to show that under $\mathbb{Q}_x$, the variable 
\[
x^{p}\int_0^{T_0} \frac{1} {X^{p+1}_s}\mathrm{d} s
\]
is distributed as $\int_0^{\ee_q}e^{-p\xi_s}\mathrm{d} s$. Then the change of variables $t=\tau(s/x)$, $s=x\int_0^t e^{\xi^\dag_u}\mathrm{d}u$, yields
\[
\begin{split}
x^{p}\int_0^{T_0} \frac{1} {X^{p+1}_s} \mathrm{d} s&=x^{-1}\int_0^{T_0}e^{-(p+1)\xi^\dag_{\tau(s/x)}}\mathrm{d} s\\
&=\int_0^{\beta}e^{-(p+1)\xi^\dag_{t}}e^{\xi^\dag_t}\mathrm{d} t\\
&=\int_0^{\beta}e^{-p\xi^\dag_{t}}\mathrm{d} t,
\end{split}
\]
which implies the desired identity in law since $(\xi^\dag_{t}, 0\le t\le \beta)$ and $(\xi_t,0\le t\le \ee_q) $ have the same law. Hence, we have
\[
\mathbb{Q}_1\left[\int_t^{T_0}\frac{1} {X^{p+1}_s}\mathrm{d} s\right]=\frac{\mathbb{Q}_1\Big[X^{-p}_t; t<T_0 \Big]}{\phi(p)+q},
\]
which implies 
\[
\frac{\partial\mathbb{Q}_1\Big[X^{-p}_t; t<T_0 \Big]}{\partial t}=-(\phi(p)+q)\mathbb{Q}_1\Big[X^{-(p+1)}_t; t<T_0 \Big].
\]
By iteration, we get that the function $t\mapsto \mathbb{Q}_1\Big[X^{-p}_t; t<T_0 \Big]$ is completely monotone and takes value 1 for $t=0$. Thus taking $p=1$, we deduce that $h(t)$ is completely monotone on $(0,\infty)$ and that $\lim_{t\downarrow 0} h(t)=q$. Finally  from Theorem 51.6 and Proposition 51.8 in \cite{Sa}, we have that  the law of $I_{\ee_q}$ is a mixture of exponentials.
\end{proof}

\begin{proof}[Proof of Proposition 2.3] The proof follows from the identity
\[
\mathbb{E}\Big[I_{\ee_q}^{n}\Big]=\frac{n}{\phi(n)+q}\mathbb{E}\Big[I_{\ee_q}^{n-1}\Big], \qquad n>0
\]
Indeed, on the one hand it is clear that
\[
\mathbb{E}\Big[I_{\ee_q}^{n}\Big]=\int_0^{\infty}x^n k(x)\mathrm{d}x=n\int_{0}^\infty \mathrm{d}y\, y^{n-1}\int_{y}^{\infty}k(x) \mathrm{d}x.
\]
On the other hand from the identity (\ref{renewal}) with $f(x)=e^{-nx}$ and a  change of variables, we get
\[
\begin{split}
\frac{n}{\phi(n)+q}\mathbb{E}\Big[I_{\ee_q}^{n-1}\Big]&=n\int_0^\infty U_q(\mathrm{d} x)\,e^{-nx}\int_{0}^\infty y^{n-1}k(y)\mathrm{d}y\\
&=n\int_0^\infty U_q(\mathrm{d} x)\int_{0}^\infty y^{n-1}e^{-nx}k(y)\mathrm{d}y\\
&=n\int_0^\infty U_q(\mathrm{d} x)\int_{0}^\infty z^{n-1}k(ze^{-x})\mathrm{d}z\\
&=n\int_0^\infty  \mathrm{d}z\,z^{n-1}\int_{0}^\infty k(ze^{-x})U_q(\mathrm{d} x).
\end{split}
\]
Then putting the pieces together, we have
\[
\int_0^\infty \mathrm{d}y\, y^{n-1}\int_{y}^{\infty}k(x) \mathrm{d}x=\int_0^\infty  \mathrm{d}y\,y^{n-1}\int_{0}^\infty k(ye^{-x})U_q(\mathrm{d} x), \qquad \textrm{for }\,n>0,
\]
which implies the desired result because the density $$y\mapsto\frac{1}{\mathbb{E}(I_{\ee_{q}})}\int_{y}^{\infty}k(x) \mathrm{d}x,$$
is determined by its entire moments, which in turn is an easy consequence of the fact that $k$ is so. 
\end{proof}

\begin{proof}[Proof of Theorem \ref{genCPY}]
By Theorem 2.1 (when $q>0$) and  Theorem 3.9 in \cite{BertoinLinderMaller} (when $q=0$), we know that there exists  a density of $I_{\ee_q}$,  for $q\ge 0$,  that we denote by $h$. Moreover,  in \cite{Carmona97} it has been proved that the moments of $I_{\ee_q}$ are given by 
\begin{equation}\label{moments}
\mathbb{E}\Big[I_{\ee_q}^{n}\Big]=\frac{n!}{\prod^{n}_{i=1}\Big(q+\phi(i)\Big)},\qquad\,\, n\in\mathbb{N}
\end{equation} where the product is understood as $1$ when $n=0.$ 

We first prove that the function $\widetilde{h}:(0,\infty)\to(0,\infty)$ defined via 
$$
\widetilde{h}(x)=\begin{cases}cxh(x)+\displaystyle\int^{\infty}_{x}\overline{\Pi}(\log(y/x))h(y)\mathrm{d}y+q\int_{x}^\infty h(y)\mathrm{d}y ,&\text{if}\ x\in(0,1/c),\\0,& \text{elsewhere},\end{cases}
$$ 
is a density for the law of $I_{\ee_q}$ and hence that $h=\widetilde{h}$ a.e. Then we prove that the equality (\ref{prop1}) holds. In order to do so,  it is enough to verify that 
$$\int^{\infty}_{0}x^{n}\widetilde{h}(x)\mathrm{d}x=\frac{n!}{\prod^{n}_{i=1}\Big(q+\phi(i)\Big)},\qquad n\in\mathbb{N},$$ 
since the law of $I_{\ee_q}$ is determined by its entire moments.

Indeed, elementary computations, identity (\ref{prop2}) and the fact that
\[
\int_{0}^\infty e^{-\theta y}U_q(\mathrm{d}y)=\frac{1}{\phi(\theta)+q},\qquad \theta\ge 0,
\]
give that for any integer  $n\geq 0$,
\begin{equation*}
\begin{split}
\int^{\infty}_{0}x^{n}\widetilde{h}(x)\mathrm{d}x&=c\int^{\infty}_{0}\mathrm{d}x\, x^{n+1}h(x)+\int^{\infty}_{0}\mathrm{d}x\,x^{n}\int^{\infty}_{x}\mathrm{d}y\,\overline{\Pi}(\log(y/x))h(y)\\
 &\hspace{6.5cm} +q\int_0^\infty \mathrm{d} x x^n\int_{0}^\infty h(xe^y)U_q(\mathrm{d}y)\\
&=\frac{n!(n+1)c}{\prod^{n+1}_{i=1}\Big(q+\phi(i)\Big)}+\int^{\infty}_{0}\mathrm{d}y\,h(y)\int^{y}_{0}\mathrm{d}x\,x^{n}\overline{\Pi}(\log(y/x))\\
&\hspace{6.5cm} +q\int_{0}^\infty U_q(\mathrm{d}y)\int_0^\infty \mathrm{d} x x^n h(xe^y)\\
&=\frac{n!(n+1)c}{\prod^{n+1}_{i=1}\Big(q+\phi(i)\Big)}+\int^{\infty}_{0}\mathrm{d}y\,h(y)y^{n+1}\int^{\infty}_{0}\mathrm{d}z\,e^{-(n+1)z}\overline{\Pi}(z)\\
&\hspace{6.2cm} +q\int_{0}^\infty  U_q(\mathrm{d}y)e^{-(n+1)y}\int_0^\infty \mathrm{d} z z^n h(z)\\
&=\frac{n!(n+1)c}{\prod^{n+1}_{i=1}\Big(q+\phi(i)\Big)}+\frac{(n+1)!}{\prod^{n+1}_{i=1}\Big(q+\phi(i)\Big)}\frac{\displaystyle\int^{\infty}_{0}\big(1-e^{-(n+1)z}\big)\Pi(\mathrm{d}z)}{n+1}\\
&\hspace{5.7cm} +q\frac{n!}{\prod^{n}_{i=1}\Big(q+\phi(i)\Big)}\int_{0}^\infty U_q(\mathrm{d}y)e^{-(n+1)y}\\
&=\frac{n!}{\prod^{n}_{i=1}\Big(q+\phi(i)\Big)}\frac{(n+1)c+\displaystyle\int^{\infty}_{0}\big(1-e^{-(n+1)z}\big)\Pi(\mathrm{d}z)+q}{q+\phi(n+1)}\\
&=\frac{n!}{\prod^{n}_{i=1}\Big(q+\phi(i)\Big)}.
\end{split}
\end{equation*}
Now, let $\mathcal{N}=\left\{x\in\mathbb{R}: h(x)\neq\widetilde{h}(x)\right\}.$ By the above arguments, we know that the Lebesgue measure of $\mathcal{N}$ is zero. Let $k:(0,\infty)\to(0,\infty)$ be the function defined by 
$$
k(x)=\begin{cases}h(x),& \text{if}\ x\in\mathcal{N}^{c},\\
\displaystyle\frac{1}{1-cx}\left(\displaystyle\int^{\infty}_{x}\overline{\Pi}(\log(y/x))h(y)\mathrm{d}y+q\int_x^\infty h(y)\mathrm{d}y\right),& \text{if}\ x\in\mathcal{N}.\end{cases}
$$  
We now prove  that $k(x)$ satisfies  equation (\ref{prop1}) everywhere. If $x\in\mathcal{N}^{c}$ then  we have that $k(x)=h(x)=\widetilde{h}(x),$ and hence  equation (\ref{prop1}) is verified. Finally, if $x\in\mathcal{N},$ we have the following equalities 
\begin{equation*}
\begin{split}
cxk(x)&+\int^{\infty}_{x}\overline{\Pi}(\log(y/x))k(y)\mathrm{d}y+q\int_x^\infty k(y)\mathrm{d} y\\
&=cxk(x)+\int^{\infty}_{x}\overline{\Pi}(\log(y/x))k(y)\mathbf{1}_{\{y\in\mathcal{N}^{c}\}}\mathrm{d}y+q\int_x^\infty k(y)\mathbf{1}_{\{y\in\mathcal{N}^{c}\}}\mathrm{d} y\\
&=cxk(x)+\int^{\infty}_{x}\overline{\Pi}(\log(y/x))h(y)\mathbf{1}_{\{y\in\mathcal{N}^{c}\}}\mathrm{d}y+q\int_x^\infty h(y)\mathbf{1}_{\{y\in\mathcal{N}^{c}\}}\mathrm{d} y\\
&=\frac{cx}{1-cx}\left(\int^{\infty}_{x}\overline{\Pi}(\log(y/x))h(y)\mathrm{d}y+q\int_x^\infty h(y)\mathrm{d} y) \right)\\
&\hspace{4.5cm}+\int^{\infty}_{x}\overline{\Pi}(\log(y/x))h(y)\mathrm{d}y+q\int_x^\infty h(y)\mathrm{d} y\\ 
&=k(x).
\end{split}
\end{equation*}  
Conversely if $k$ is a density on $(0,1/c)$ satisfying equation (\ref{prop1}), from the above computations it is clear that $k$ and $I_{\ee_q}$ have the same entire moments. This implies that the  $k$ is a density of   the exponential functional $I_{\ee_q}$.
\end{proof}
\begin{proof}[Proof of Theorem 2.6] The proof consists of three steps. First we show that when $q=0,$ $\mathbb{E}\big[I^{-\alpha}\big]<\infty$, then for $q\geq 0$ we obtain a technical estimate on the maximal growth of $k(x)$ as $x \downarrow 0$, and finally the statement of the theorem.

\emph{Step 1}. Here we assume that $q=0$ and prove that $\mathbb{E}\big[I^{-\alpha}\big]<\infty.$ The case $\alpha=0$ is obvious. For $\alpha \in (0,1)$, we have from Theorem 2 in \cite{Bertoin05} that there exists a random variable $R$, independent of $\xi$, such that $IR \stackrel{d}{=} \mathbf{e}$, where $\mathbf{e}$ follows a unit mean exponential distribution. Since $\mathbb{E}[\mathbf{e}^{-\alpha}]<\infty$, the result follows. 

Finally let $\alpha \geq 1$. With (\ref{prop1}) and some standard computations, we find
\[
\begin{split}
\int_0^\infty x^{-\beta-1} k(x) \, \mathrm{d}x &=  c \int_0^\infty \mathrm{d}x\,x^{-\beta} k(x) + \int_0^\infty \mathrm{d}x \, x^{-\beta-1} \, \int_x^\infty \mathrm{d}y\,\overline{\Pi}(\log (y/x)) k(y)  \nonumber\\
 & =  c \mathbb{E}\Big[I^{-\beta}\Big] + \int_0^\infty \mathrm{d}y \, k(y) \int_0^y \mathrm{d} x\,x^{-\beta-1} \overline{\Pi}(\log (y/x))  \nonumber\\
 &=  c \mathbb{E}\Big[I^{-\beta}\Big] + \int_0^\infty\mathrm{d}y\, y^{-\beta} k(y) \int_0^\infty\mathrm{d}u\, e^{\beta u} \overline{\Pi}(u) \nonumber\\
 & = -\frac{1}{\beta} \mathbb{E}\Big[I^{-\beta}\Big] \left( -c\beta + \int_0^\infty \Big(1-e^{\beta z}\Big) \, \Pi(\mathrm{d}z) \right), \nonumber
\end{split}
\]
that is to say 
\begin{equation}\label{KvS5} 
\mathbb{E}\Big[I^{-\beta-1}\Big]=\mathbb{E}\Big[I^{-\beta}\Big] \frac{\phi(-\beta)}{-\beta}, 
\end{equation}
where $\phi$ is the Laplace exponent of $\xi$, which can be extended to $(-\alpha,\infty)$ since for $\beta<\alpha$
\begin{equation}\label{KvS20}
 \int_0^\infty ( e^{\beta u}-1) \, \Pi(\mathrm{d}u) = \beta \int_1^\infty \overline{\Pi}(\log (z)) z^{\beta-1} \, \mathrm{d}z < \infty.
\end{equation}
To see that (\ref{KvS20}) holds, note that $\overline{\Pi}(\log (z))$ is regularly varying with index $-\alpha$ by (\ref{conv_eq_def}). Hence $\overline{\Pi}(\log z)=z^{-\alpha} \ell(z)$ for a slowly varying function $\ell$ and we can apply Proposition 1.5.10 from Bingham et al. \cite{Bingham87}.

Now, by iteratively using (\ref{KvS5}) we see that for $\mathbb{E}[I^{-\alpha}]<\infty$ it is enough to have $\mathbb{E}[I^{-\alpha'}]<\infty$ for some $\alpha' \in [0,1)$. But this obviously holds if $\alpha'=0$, while if $\alpha' \in (0,1)$ it holds by the same argument as we used above for the case $\alpha \in (0,1)$.

\emph{Step 2}. We assume that $q\geq 0.$ For $q=0,$ let $p$ be any function such that $p(0)=0$ and $\min \{ \alpha-1,0 \} < p(\alpha) < \alpha$ for all $\alpha >0$. When $q>0$ the function $p$ will be taken as zero and hence the symbol $p(\alpha)$ will be taken as $0$. The goal of this step is to show 

\begin{equation}\label{result_step2}
 \frac{k(x)}{x^{p(\alpha)}}\qquad \mbox{ stays bounded as $\qquad x \downarrow 0$.} 
\end{equation}
Observe that when $q>0$ it follows from (\ref{prop1}) that $\liminf_{x\to0}k(x)\geq q.$
Set $h(x):=k(x)/x^{p(\alpha)}$. We can write (\ref{prop1}) as

\begin{equation}\label{KvS10}
1-cx = x \int_1^\infty \overline{\Pi}(\log (z)) z^{p(\alpha)} \frac{h(xz)}{h(x)} \, \mathrm{d}z+ \frac{qx^{p(\alpha)}\mathbb{P}(I_{\ee_{q}}>x)}{h(x)}.
\end{equation} 
We argue by contradiction. Take some $\hat{x} \in (0,1/c)$. If $h$ were not bounded at $0+$, then $\mathbf{1}_{\{ x \leq \hat{x} \}} h(x)$ would keep on attaining new maxima as $x \downarrow 0$. (Note that $\hat{x}$ is present just to make sure this statement also holds if $k$ is not bounded at $1/c-$.) In particular this means that a sequence of points $(x_n)_{n \geq 0}$ exists with $x_n \downarrow 0$ as $n \to \infty$ and such that $h(x_n) \geq \sup_{x \in [x_n,\hat{x}]} h(x)$. We will show that this implies 
\[
x_n \int_1^\infty \overline{\Pi}(\log (z)) z^{p(\alpha)} \frac{h(x_n z)}{h(x_n)} \,  \mathrm{d}z +\frac{qx^{p(\alpha)}_{n}\mathbb{P}(I_{\ee_{q}}>x_{n})}{h(x_{n})}\to 0\quad\textrm{ as }\quad n \to \infty,
\]
 which indeed contradicts (\ref{KvS10}) since $1-c x_n \to 1$ as $n \to \infty$. Observe that if $q>0$ and $h$ is not bounded at $0+$ then the second term in the latter equation tends to $0,$ because $p(\alpha)=0,$ by construction. So we just have to prove that the first term in the latter equation tends to $0.$ For this, we have
\begin{equation}\label{KvS11}
\begin{split}
x_n \int_1^\infty \overline{\Pi}(\log (z)) z^{p(\alpha)} \frac{h(x_n z)}{h(x_n)} \, \mathrm{d}z & = x_n \int_1^{\hat{x}/x_n} \overline{\Pi}(\log (z)) z^{p(\alpha)} \frac{h(x_n z)}{h(x_n)} \, \mathrm{d}z \\
 &  \hspace{.5cm} + \, x_n \int_{\hat{x}/x_n}^{\infty} \overline{\Pi}(\log (z)) z^{p(\alpha)} \frac{h(x_n z)}{h(x_n )} \, \mathrm{d}z. 
%& \leq & x_n \int_1^{\hat{x}/x_n} \overline{\mathbb{P}i}(\log z) \, dz + x_n \overline{\mathbb{P}i}(\log \hat{x}/x_n) \int_{\hat{x}/x_n}^{\infty} \frac{k(x_n z)}{k(x_n )} \, dz \nonumber\\
%& \leq & x_n \int_1^{\hat{x}/x_n} \overline{\mathbb{P}i}(\log z) \, dz + \overline{\mathbb{P}i}(\log \hat{x}/x_n) \frac{1}{k(x_n )} \int_0^\infty k(y) \, dy, \nonumber
\end{split}
\end{equation}
We first deal with the first integral on the right hand side of (\ref{KvS11}). By construction of the sequence $(x_n)_{n \geq 0}$, we have $h(x_n z)\leq h(x_n) $ for any $z \in [1,\hat{x}/x_n]$, hence  

\begin{equation}\label{KvS13}
  x_n \int_1^{\hat{x}/x_n} \overline{\Pi}(\log (z)) z^{p(\alpha)} \frac{h(x_n z)}{h(x_n)} \, \mathrm{d}z\le x_n \int_1^{\hat{x}/x_n} \overline{\Pi}(\log (z)) z^{p(\alpha)} \, \mathrm{d}z.
\end{equation}
If $q>0$ or $\alpha=0$ (recall $p(0)=0$), we can take any $1<z_0$ and write
\[
\begin{split}
 x_n \int_1^{\hat{x}/x_n} \overline{\Pi}(\log(z)) \, \mathrm{d}z &= x_n \int_1^{z_0} \overline{\Pi}(\log (z)) \, \mathrm{d}z + x_n \int_{z_0}^{\hat{x}/x_n} \overline{\Pi}(\log (z)) \, \mathrm{d}z \\
&\leq x_n \int_1^{z_0} \overline{\Pi}(\log (z)) \, \mathrm{d}z + x_n \left( \frac{\hat{x}}{x_n} -z_0 \right)  \overline{\Pi}(\log (z_0)), 
\end{split}
\]
where the inequality uses that $\overline{\Pi}$ is decreasing. Letting $n \to \infty$, recalling that $x_n \downarrow 0$, we see that the first integral on the right hand side vanishes while the second term tends to $\hat{x} \overline{\Pi}(\log z_0)$. As we can make this term arbitrarily small by choosing $z_0$ large enough, since $\overline{\Pi}(\log z) \to 0$ as $z \to \infty$, it follows indeed that (\ref{KvS13}) vanishes. 

Next, let $\alpha >0$. Since $\alpha-1<p(\alpha)<\alpha$, we can choose some $\beta \in (0,\alpha)$ such that $p(\alpha)-\beta+1 \in (0,1)$. Using this we find
\[
\begin{split} 
x_n \int_1^{\hat{x}/x_n} \overline{\Pi}(\log (z)) z^{p(\alpha)} \, \mathrm{d}z &= x_n \int_1^{\hat{x}/x_n} \overline{\Pi}(\log (z)) z^{\beta-1} z^{p(\alpha)-\beta+1} \,\mathrm{d}z \\
&\leq x_n \left( \frac{\hat{x}}{x_n} \right)^{p(\alpha)-\beta+1} \int_1^{\hat{x}/x_n} \overline{\Pi}(\log (z)) z^{\beta-1} \,\mathrm{d}z ,
\end{split}
\]
and the right hand side indeed vanishes as $n \to \infty$, again since $x_n \downarrow 0$ and by (\ref{KvS20}).

%\begin{equation}\label{KvS14} 
%\int_1^\infty \overline{\mathbb{P}i}(\log z) z^{\beta-1} \, dz = \frac{1}{\beta} \int_0^\infty (e^{\beta u}-1) \, \mathbb{P}i(du) < \infty \quad \mbox{for all $\beta \in (0,\alpha)$}
%\end{equation}
%by (\ref{KvS20}).

It remains to show that the second integral on the right hand side of (\ref{KvS11}) vanishes as $n \to \infty$. We have
\[
\begin{split} 
x_n \int_{\hat{x}/x_n}^{\infty} \overline{\Pi}(\log (z)) z^{p(\alpha)} \frac{h(x_n z)}{h(x_n )} \, \mathrm{d}z &\leq x_n \overline{\Pi}(\log (\hat{x}/x_n)) \frac{1}{h(x_n)} \int_{\hat{x}/x_n}^{\infty} z^{p(\alpha)} h(x_n z) \, \mathrm{d}z \\
&= \frac{\overline{\Pi}(\log (\hat{x}/x_n))}{x_n^{p(\alpha)}}  \frac{1}{h(x_n)} \int_{\hat{x}}^\infty k(u) \, \mathrm{d}u,
\end{split}
\]
where the inequality uses that $\overline{\Pi}$ is decreasing and the equality the definition of $h$ together with the substitution $u=x_n z$. Since $k$ is a density and by assumption $h(x_n) \to \infty$ as $n$ goes to $\infty$, for the right hand side to vanish it remains to show that $\overline{\Pi}(\log \hat{x}/x_n)/x_n^{p(\alpha)}$ stays bounded as $n$ increases. If $q>0$ or $\alpha=0$ (recall $p(0)=0$) it is immediate since $\overline{\Pi}$ is decreasing. If $\alpha >0$, for any $1<z_0<z$ integration by parts yields
\[ 
\overline{\Pi}(\log (z)) z^{p(\alpha)} = p(\alpha) \int_{z_0}^z \overline{\Pi}(\log (u)) u^{p(\alpha)-1} \, \mathrm{d}u + \int_{z_0}^z u^{p(\alpha)} \, \mathrm{d} \overline{\Pi}(\log (u)) + \overline{\Pi}(\log (z_0)) z_0^{p(\alpha)}. 
\]
Now, if we let $z$ goes to $\infty$, then since $p(\alpha) < \alpha$ we see from (\ref{KvS20}) that the first integral in the right hand side stays bounded while the second integral is negative on account of the fact that $\overline{\Pi}$ is decreasing. Consequently the left hand side has to stay bounded and we are done.

\emph{Step 3, case $q=0$}.  Denote $C_\alpha=\mathbb{E}[I^{-\alpha}]$, which is finite by Step 1. From (\ref{prop1}) we obtain for all $x>0$,
\begin{equation}\label{CPY_div}
 (1-cx)\frac{k(x)}{\overline{\Pi}(\log (1/x))} = \int_x^{\infty} \frac{\overline{\Pi}(\log (y/x))}{\overline{\Pi}(\log (1/x))} k(y) \, \mathrm{d}y. 
 \end{equation}
Using this equation together with $k \geq 0$, Fatou's lemma and the fact that $\Pi$ has an exponential tail (cf. (\ref{conv_eq_def})) yields
\[
\begin{split} 
\liminf_{x \downarrow 0} \frac{k(x)}{\overline{\Pi}(\log (1/x))} &=  \liminf_{x \downarrow 0} \frac{cxk(x)}{\overline{\Pi}(\log (1/x))}+\liminf_{x \downarrow 0} \int_x^{\infty} \frac{\overline{\Pi}(\log (y/x))}{\overline{\Pi}(\log (1/x))} k(y) \,  \mathrm{d}y \\
&\geq \int_0^\infty y^{-\alpha} k(y) \, \mathrm{d}y = C_\alpha. 
\end{split}
\]
On the other hand, for any $\eps>0$ we have as $x \downarrow 0$

\[ \int_\eps^{\infty} \frac{\overline{\Pi}(\log (y/x))}{\overline{\Pi}(\log (1/x))} k(y) \,  \mathrm{d}y \to \int_\eps^{\infty} y^{-\alpha} k(y) \,  \mathrm{d}y \leq C_\alpha. \]
If $\alpha>0$, this follows from the fact that the convergence (\ref{conv_eq_def}) is uniform over $y \in [\eps,\infty)$, see e.g. Theorem 1.5.2 in \cite{Bingham87}. If $\alpha=0$ this uniformity holds only over intervals of the form $[\eps,x_0]$, in which case we can write the left hand side as the sum of integrals over $[\eps,x_0]$ and $[x_0,\infty)$, the former in the limit again is bounded above by $C_\alpha,$ while for the latter we can use that $\overline{\Pi}$ is decreasing to see

\[ \int_{x_0}^{\infty} \frac{\overline{\Pi}(\log(y/x))}{\overline{\Pi}(\log (1/x))} k(y) \,  \mathrm{d}y \leq \frac{\overline{\Pi}(\log (x_0/x))}{\overline{\Pi}(\log (1/x))} \int_{x_0}^{\infty} k(y) \,  \mathrm{d}y, \]
then letting first $x \to \infty$, thereby using (\ref{conv_eq_def}), and then $x_0 \to \infty$ it follows that this term vanishes.

So it remains to show that 

\[ \limsup_{x \downarrow 0} \int_x^{\eps} \frac{\overline{\Pi}(\log (y/x))}{\overline{\Pi}(\log (1/x))} k(y) \,  \mathrm{d}y \to 0 \quad \mbox{as $\eps \to 0$.} \]
For this, we get for $\eps$ small enough and $x<\eps$:
\[
\begin{split}  
\frac{1}{\overline{\Pi}(\log (1/x))} \int_x^{\eps} \overline{\Pi}(\log (y/x)) k(y) \,  \mathrm{d}y &= \frac{x}{\overline{\Pi}(\log (1/x))} \int_1^{\eps/x} \overline{\Pi}(\log (z)) k(xz) \,  \mathrm{d}z \\
&\leq \frac{C x}{\overline{\Pi}(\log (1/x))} \int_1^{\eps/x} \overline{\Pi}(\log (z)) (xz)^{p(\alpha)} \,  \mathrm{d}z\\
& = \frac{C x^{1+p(\alpha)}}{\overline{\Pi}(\log (1/x))} \int_1^{\eps/x} \overline{\Pi}(\log (z)) z^{p(\alpha)} \,  \mathrm{d}z \\ 
&\sim \frac{C' x^{1+p(\alpha)}}{\overline{\Pi}(\log (1/x))} \left( \frac{\eps}{x} \right)^{p(\alpha)+1} \overline{\Pi}(\log (\eps/x)) \quad \mbox{as $x \downarrow 0$,}
\end{split}
\]
where $C$ and $C'$ are constants, the inequality holds by Step 2 (cf. (\ref{result_step2})) and the asymptotics follows from Karamata's theorem (see e.g. Theorem 1.5.11 in \cite{Bingham87}), which indeed applies here since $\overline{\Pi}(\log (z))$ is regularly varying with index $-\alpha$ (cf. (\ref{conv_eq_def})) and by construction (see Step 2) $p(\alpha) \geq \alpha-1$. Now, using (\ref{conv_eq_def}) we see that the ultimate right hand side goes to $C' \eps^{p(\alpha)+1-\alpha}$ as $x \downarrow 0$ and this indeed vanishes as $\eps \to 0$ since by construction $p(\alpha)+1-\alpha>0$ for all $\alpha \geq 0$. 

\noindent\emph{Step 3, case $q>0$}. We will prove that $$\int^{\infty}_{x}\overline{\Pi}(\log(y/x))k(y)dy\xrightarrow[x\to 0]{}0.$$ By Step 2, we can assume that $k$ is bounded by $K\geq q,$ in a neighborhood of $0+.$ Let $\delta>1$ fixed, for $x$ small enough we have that 
\begin{equation*}
\begin{split}
\int^{x\delta}_{x}\overline{\Pi}(\log(y/x))k(y)\mathrm{d}y&\leq K \int^{x\delta}_{x}\overline{\Pi}(\log(y/x))\mathrm{d}y\\
&=K \int^{\log\delta}_{0}\overline{\Pi}(u)xe^{u}\mathrm{d}u\\
&\leq Kx\delta\int^{\log\delta}_{0}\overline{\Pi}(u)\mathrm{d}u\xrightarrow[x\to0]{}0.
\end{split}
\end{equation*}
Also, we have that 
\begin{equation*}
\begin{split}
\int^{\infty}_{x\delta}\overline{\Pi}(\log(y/x))k(y)\mathrm{d}y\leq \overline{\Pi}(\log\delta)\int^{\infty}_{x\delta}k(y)\mathrm{d}y\xrightarrow[x\to 0]{}\overline{\Pi}(\log\delta),
\end{split}
\end{equation*}
We conclude by making $\delta\to\infty.$ Indeed, using equation (\ref{prop1}) and the above arguments we conclude that 
$$(1-cx)k(x)-q\mathbb{P}(I_{\ee_q}>x)\xrightarrow[x\to 0]{}0,$$ from where the
result follows.

\end{proof}

\section{Examples and some numerics}
In this section, we illustrate Theorem 2.3 , Corollary 2.4 and equation (\ref{expfuncsn}) with some examples and we also provide some applications of Theorem 2.5.\\

\noindent{\bf Example 1.}  Let $q>0$ and consider the  case when the subordinator is just a linear drift with $c>0$. By a simple Laplace inversion, we deduce $u_q( x)=c^{-1}e^{-\frac{q}{c}x}$. Thus, from identities  (\ref{prop1}) and (\ref{prop2}) we get
\[
(1-cx)k(x) = \frac{q}{c}\int_{[0,\infty)} k(xe^y)e^{-\frac{q}{c}y} \mathrm{d} y,\, \,\qquad x\in(0,1/c).
\]
After straightforward computations, we deduce that the density of $I_{\ee_q}$ is of the form
\[
k(x)=q(1-cx)^{\frac{q}{c}-1},\, \,\qquad x\in(0,1/c).
\]

Let $\rho>0$ and note that 
\[
\phi_\rho(\theta)=c\theta+ q\frac{\theta}{\theta+\rho}\qquad\textrm{and}\qquad c_\rho=\frac{q}{c^{\rho+1}}\frac{\rho(\rho+1)\Gamma(q/c)}{\Gamma(\rho+q/c+1)}.
\]
According to Corollary 2.4, the density of the exponential functional of the subordinator whose Laplace exponent is given by  $\phi_\rho$, satisfies
\[
h(x)=c^{\rho+1}\frac{\Gamma(\rho+q/c+1)}{\Gamma(\rho+1)\Gamma(q/c)}x^{\rho}(1-cx)^{q/c-1},\qquad\textrm{ for }\quad x\in(0,1/c),
\]
in other words  the exponential functional has the same law as $c^{-1}B(\rho+1, q/c)$, where $B(\rho+1, q/c)$ is a beta r. v. with parameters $(\rho+1, q/c)$.

Now, let us consider the associated spectrally negative Levy process $Y$ whose Laplace exponent is written as follows
\[
\psi(\lambda)=\frac{\lambda^2}{\phi_\rho(\lambda)}=\frac{\lambda(\lambda+\rho)}{c(\lambda+\rho)+q},
\]    
From  (\ref{expfuncsn}), we deduce that the density of the exponential functional $I_\psi$ associated to $Y$ satisfies
\[
k_\psi(x)=\frac{\rho c^{\rho+1}}{c\rho+q}\frac{\Gamma(\rho+q/c+1)}{\Gamma(\rho+1)\Gamma(q/c)}x^{-(\rho+q/c)}(x-c)^{q/c-1}\qquad\textrm{ for }\quad x> c.
\]
Hence $I_\psi$ has the same law as $c(B(\rho, q/c))^{-1}$.\\

\noindent{\bf Example 2.} Let  $q=c=0$, $\beta>0$  and 
\[
\overline{\Pi}(z)=\frac{\beta}{\Gamma(a+1)}e^{-\frac{(s-1)}{a} z}\Big(e^{\frac{z}{a}}-1\Big)^{a-1},
\] 
where $a\in (0,1]$ and $s\ge a$. Thus, the Laplace exponent $\phi$ has the form
\[
\phi(\theta)=\beta\frac{\theta\Gamma(a(\theta-1)+s)}{\Gamma(a\theta+s)}.
\]
In this case, the equation (\ref{prop1}) can be written as follows
\[
\begin{split}
k(x)&=\frac{\beta}{\Gamma(a+1)}\int_x^\infty (y/x)^{-\frac{s-1}{a}}\left(\left(y/x\right)^{\frac{1}{a}}-1\right)^{a-1}k(y)\mathrm{d} y\\
&=\frac{\beta x}{\Gamma(a)}\int_0^\infty (z+1)^{a-s}z^{a-1}k\Big(x(z+1)^a\Big)\mathrm{d} z,
\end{split}
\]
where we are using the change of variable $z=(y/x)^{\frac{1}{a}}-1$. After some computations we deduce  that 
\begin{equation}\label{example2}
k(z)=\frac{\beta^{s/a}}{a\Gamma(s)}z^{\frac{s-a}{a}} e^{-(\beta z)^{\frac{1}{a}}}, \qquad\textrm{ for }\quad z\ge 0.
\end{equation}
In other words $I$ has the same law as $\beta^{-1}\gamma_s^a$, where $\gamma_s$ is a gamma r.v. with parameter $s$. 

If $a=1$, the process  $\xi$ is a compound Poisson process of parameter   $\beta>0$ with exponential jumps of mean $(s-1)^{-1}>0$. From (\ref{example2}), it is clear that the law of its associated exponential functional has the same law as $\gamma_{(s,\beta)}$,  a gamma r.v. with parameters $(s,\beta)$.

We now consider the associated spectrally negative Levy process $Y$ whose Laplace exponent satisfies 
\[
\psi(\lambda)=\frac{\lambda^2}{\phi(\lambda)}=\frac{\lambda\Gamma(a\lambda+s)}{\beta\Gamma(a(\lambda-1)+s)}.
\]
The density of the exponential functional $I_\psi$ associated to $Y$ is given by
\[
k_\psi(x)=\frac{\beta^{\frac{s-a}{a}}}{a\Gamma(s-a)}x^{-s/a}e^{-(\beta/x)^{1/a}},\qquad x>0.
\]
We remark that when $a=1$, the process $\xi$ is a Brownian motion with drift and that the exponential functional $I_\psi$  has the same law as $\gamma_{(s-1,\beta)}^{-1}$. This identity in law has been established by  Dufresne \cite{Du}.

Next, let $\rho>0$ and note that 
\[
\phi_\rho(\theta)=\beta\frac{\theta\Gamma(a(\theta+\rho-1)+s)}{\Gamma(a(\theta+\rho)+s)}\qquad\textrm{and}\qquad c_\rho=\frac{\Gamma(a\rho+s)}{\beta^\rho\Gamma(s)}.
\]
According to Corollary 2.4, the density of the exponential functional of the subordinator whose Laplace exponent is given by  $\phi_\rho$, satisfies
\[
h(x)=\frac{\beta^{(s+a\rho)/a}}{a\Gamma(a\rho+s)}x^{(a\rho+s-a)/a}e^{- (\beta x)^{1/a}}\qquad\textrm{ for }\quad x>0,
\]
i.e. it has the same law   as $\beta^{-1}\gamma_{a\rho +s}^a$. In particular,  the density of the exponential functional of its associated spectrally negative Levy process satisfies
\[
k_\psi(x)=\frac{\beta^{(s+a\rho-a)/a}}{a\Gamma(a(\rho-1)+s)}x^{-(a\rho+s)/a}e^{- (\beta/x)^{1/a}},\quad x>0.
\]

\noindent{\bf Example 3.} Finally, let $a\in(0,1)$, $\beta\ge a$, $c=0$, $q=\Gamma(\beta)/\Gamma(\beta-a)$,
\[
\overline{\Pi}(z)=\frac{1}{\Gamma(1-a)}\int_z^{\infty}\frac{e^{(1+a-\beta)x/a}}{(e^{x/a}-1)^{1+a}}\mathrm{d} x\quad \textrm{ and }\quad u_q(z) =\frac{1}{\Gamma(a+1)}e^{-(\beta-1)z/a}\Big(e^{\frac{z}{a}}-1\Big)^{a-1}.
\]
The process $\xi$ with such characteristics is a killed Lamperti subordinator with parameters $(1/\Gamma(1-a), 1+a-\beta, 1/a, a)$, see Section 3.2 in Kuznetsov et al. \cite{KKPS} for a proper definition. 
From Theorem 1.3 the density of $I_{\ee_q}$ satisfies the equation
\[
k(x)=\int_0^\infty \left(\frac{xe^y}{\Gamma(1-a)}\int_z^{\infty}\frac{e^{(1+a-\beta)x/a}}{(e^{x/a}-1)^{1+a}}\mathrm{d} x+\frac{\Gamma(\beta)e^{-(\beta-1)z/a}}{\Gamma(\beta-a)\Gamma(a+1)}\Big(e^{\frac{y}{a}}-1\Big)^{a-1}\right)k(xe^y)\mathrm{d}y.
\] 
Since the above equation seems difficult to solve, we use the method of moments in order to determine the law of $I_{\ee_q}$. We first note that
\[
\mathbb{E}\Big[I^n_{\ee_q}\Big]=\frac{n!\Gamma(\beta)}{\Gamma(a n+\beta)}, 
\]
and that in the case $\beta=1$, the exponential functional $I_{\ee_q}$ has the same distribution as   
 $X_a^{-a}$, where $X_a$ is a stable random variable, i.e.
\[
\mathbb{E}\Big[e^{-\lambda X_a}\Big]=\exp\{-\lambda^{a}\}, \qquad \lambda\ge 0,
\]
see Section 3 in \cite{Bertoin01}. Recall that the negative moments of $X_a$ are given by
\[
\mathbb{E}\Big[X_a^{-n}\Big]=\frac{\Gamma(1+n/a)}{\Gamma(1+n)}, \qquad n\ge 0.
\]
Now we introduce $L_{(a,\beta)}$ and $A$, two independent r.v. whose laws are described as follows,
\[
\mathbb{P}(L_{(a,\beta)}\in \mathrm{d} y)=\mathbb{E}\left[\frac{a\Gamma(\beta)}{\Gamma(\beta/a)X^\beta_a};\frac{1}{X_a^a}\in \mathrm{d} y\right],
\]
and 
\[
\mathbb{P}(A\in \mathrm{d} y)=\Big(\beta/a-1\Big)(1-x)^{\beta/a-2}\mathbf{1}_{[0,1]}(x)\mathrm{d} x.
\]
It is important to note from example 1, that $A$ has the same law as the exponential functional associated to the subordinator $\sigma$ which is defined as follows
\[
\sigma_t=t+\beta/a-1, \qquad t\ge 0.
\]
On the one hand, it is clear that 
\[
\mathbb{E}\Big[L_{(a,\beta)}^{n}\Big]=\frac{a\Gamma(\beta)}{\Gamma(\beta/a)}\mathbb{E}\Big[X_a^{-(a n+\beta)}\Big]=\frac{\Gamma(\beta)}{\Gamma(\beta/a)}\frac{\Gamma(n+\beta/a)}{\Gamma(a n+\beta)}, 
\]
and on the other hand, we have
\[
\mathbb{E}\Big[A^n\Big]=\frac{\Gamma(n+1)\Gamma(\beta/a)}{\Gamma(n+\beta/a)},
\]
which implies the $I_{\ee_q}$ has the same law as $L_{a,\beta}A$.\\

Finally we numerically illustrate the density $k$ and its asymptotic behaviour at $0$ for some particular subordinators $\zeta$.  Let us first shortly discuss the method we used. Clearly the equation (\ref{CPY_main}) motivates the following straightforward discretisation procedure: approximate $k$ by a step function $\tilde{k}$, i.e.

\[ \tilde{k}(x) = \sum_{i=0}^{N-1} \mathbf{1}_{\{ x \in  [x_i,x_{i+1})\}} y_i, \] 
where $0=x_0<x_1<\ldots<x_{N}=1/c$ forms a grid on the $x$-axis. The heights $y_i$ can then be found by iterating over $i=N-1,\ldots,0$, thereby at each step using (\ref{CPY_main}) with $x=x_i$ and $k$ replaced by $\tilde{k}$. Two remarks are in place here.

Firstly, as (\ref{CPY_main}) is linear in $k$ the condition that $k$ is a density is required to uniquely determine the solution. This translates to the fact that the numerical procedure discussed above requires a starting point, i.e. the value $y_{N-1}>0$ should be known. (Of course, starting with $y_{N}=0$ yields $\tilde{k} \equiv 0$.) We proceed by first leaving $y_{N-1}$ undetermined, run the iteration so that every $y_i$ in fact becomes a linear function of $y_{N-1}$, and then find $y_{N-1}$ by requiring that $\tilde{k}$ integrates to $1$.

The second remark is that even though any choice of grid would in principle work, we found one in particular to be useful. Indeed, if we set $x_n=(1/c) \Delta^{N-n}$ for some $\Delta$ less than (but typically very close to) $1$, equation (\ref{prop1}) yields the following relation:
\begin{multline*}
(1-cx_n) y_n = \int_{x_n}^{\infty} \overline{\Pi}(\log (y/x_n)) \tilde{k}(y) \, \mathrm{d}y = x_n \int_{1}^{\infty} \overline{\Pi}(\log (z)) \tilde{k}(x_n z) \, \mathrm{d}z \\
= x_n \sum_{i=n}^{N-1} y_i \int_{1}^{\infty} \overline{\Pi}(\log (z)) \mathbf{1}_{\{ x_n z \in [x_i,x_{i+1}) \}} \, vz = x_n \sum_{i=n}^{N-1} y_i \int_{\Delta^{n-i}}^{\Delta^{n-i-1}} \overline{\Pi}(\log (z)) \, \mathrm{d}z.
\end{multline*}
The approximation this setup yields is very efficient in comparison with e.g. the approximation using a standard equidistant grid, due to the fact that in this case we need to evaluate only $N$ different integrals numerically\footnote{All computations were done in the open source computer algebra system SAGE: www.sagemath.org}. 

First we consider two examples for which the density $k$ of $I$ is explicitly known. The first one is taken from Example 2  with $a=1$, $\beta=2$ and $s=3/2$. In this case from (\ref{example2}), we have
\[ 
k(x) = \frac{2^{5/2}}{\sqrt{\pi}} x^{1/2} e^{-2x} \quad \mbox{for $x>0$.} 
\]
See Figures 1-4 for plots of the density $k$, the difference $\tilde{k}-k$ (where $\tilde{k}$ is obtained by the above method with $\Delta=0.998$, yielding a grid of $\approx 4500$ points and a few minutes computation time on an average laptop), the ratio $k(x)/\overline{\Pi}(\log (1/x))$ and the ratio $\tilde{k}(x)/\overline{\Pi}(\log (1/x))$ respectively.
 
The second explicit example is taken again from Example 2 with $\beta=1$ and $s=1$ and $a= 1/2$. In this case from (\ref{example2}), we have
\[ 
k(x) = 2 x e^{-x^2} \quad \mbox{for $x>0$.} 
\]
It is important to note that $\overline{\Pi}$ satisfies ${\bf (A)}$ with $\alpha=1$. In this case, Figures 5 -8 show plots plots of the density $k$, the difference $\tilde{k}-k$, the ratio $k(x)/\overline{\Pi}(\log (1/x))$ and the ratio $\tilde{k}(x)/\overline{\Pi}(\log(1/x))$ respectively.

Next we look at two examples where no formula for $k$ is available. The first one is when $\xi$ is a stable subordinator with drift, i.e. $c=1$ and $\Pi(\mathrm{d}x) = x^{-1-a} \, \mathrm{d}x$, where we take $a=1/4$. See Figures 9 \& 10 for a plot of $\tilde{k}$ and the ratio $\tilde{k}(x)/\overline{\Pi}(\log (1/x))$ respectively. Note that this is an example of a L\'evy measure satisfying (\ref{conv_eq_def}) with parameter $0$.

Finally, the second example is a subordinator $\xi$ with zero drift and L\'evy measure of the form $\Pi(\mathrm{d}x)=x^{-1/4} \exp(-x^n) \, \mathrm{d}x$. Figure 11 shows $\tilde{k}$ for $n=1$ (blue), $n=2$ (purple) and $n=3$ (green) respectively. Figure 12 shows the ratio $\tilde{k}(x)/\overline{\Pi}(\log 1/x)$ for the case $n=1$, since then ${\bf (A)}$ is satisfied with $\alpha=1$.

\clearpage

\begin{figure}
\begin{minipage}[b]{0.5\linewidth}
\centering
\includegraphics[width=8cm]{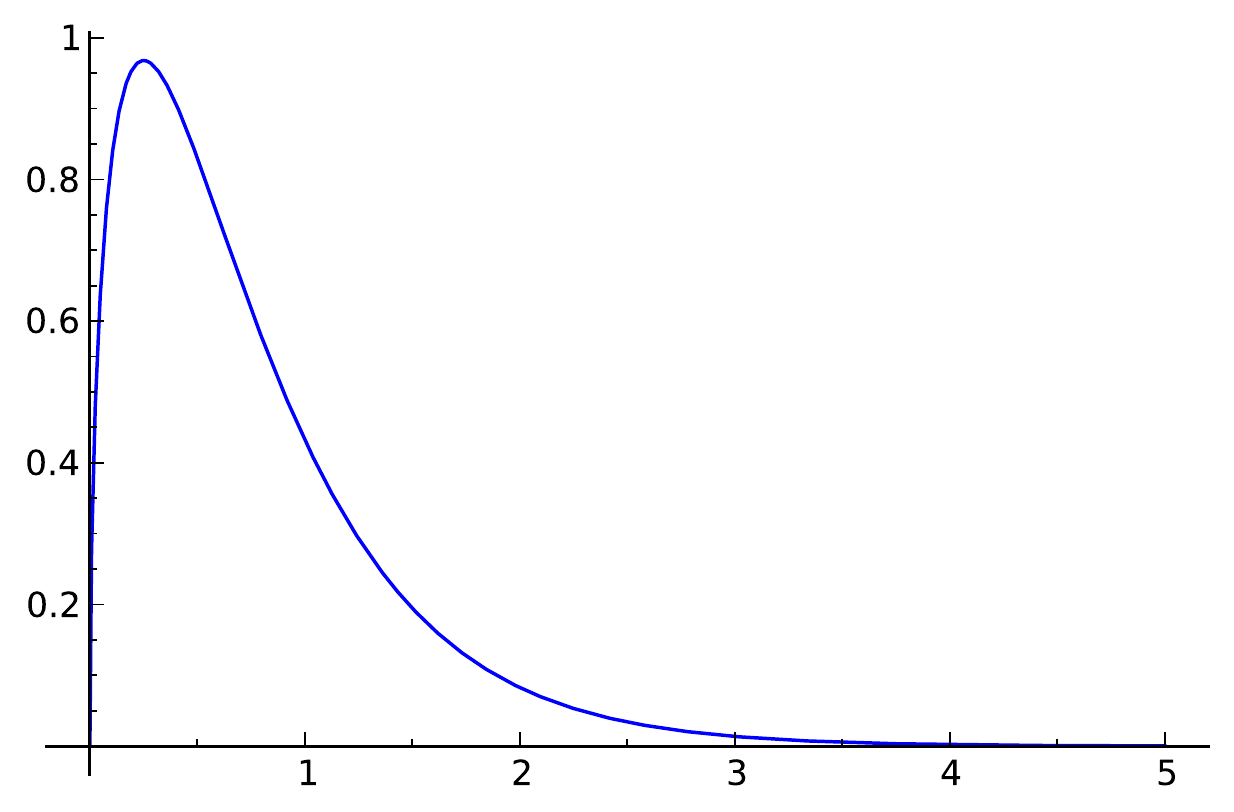}
\caption{The density function $k$}
\end{minipage}
\hspace{0.1cm}
\begin{minipage}[b]{0.5\linewidth}
\centering
\includegraphics[width=8cm]{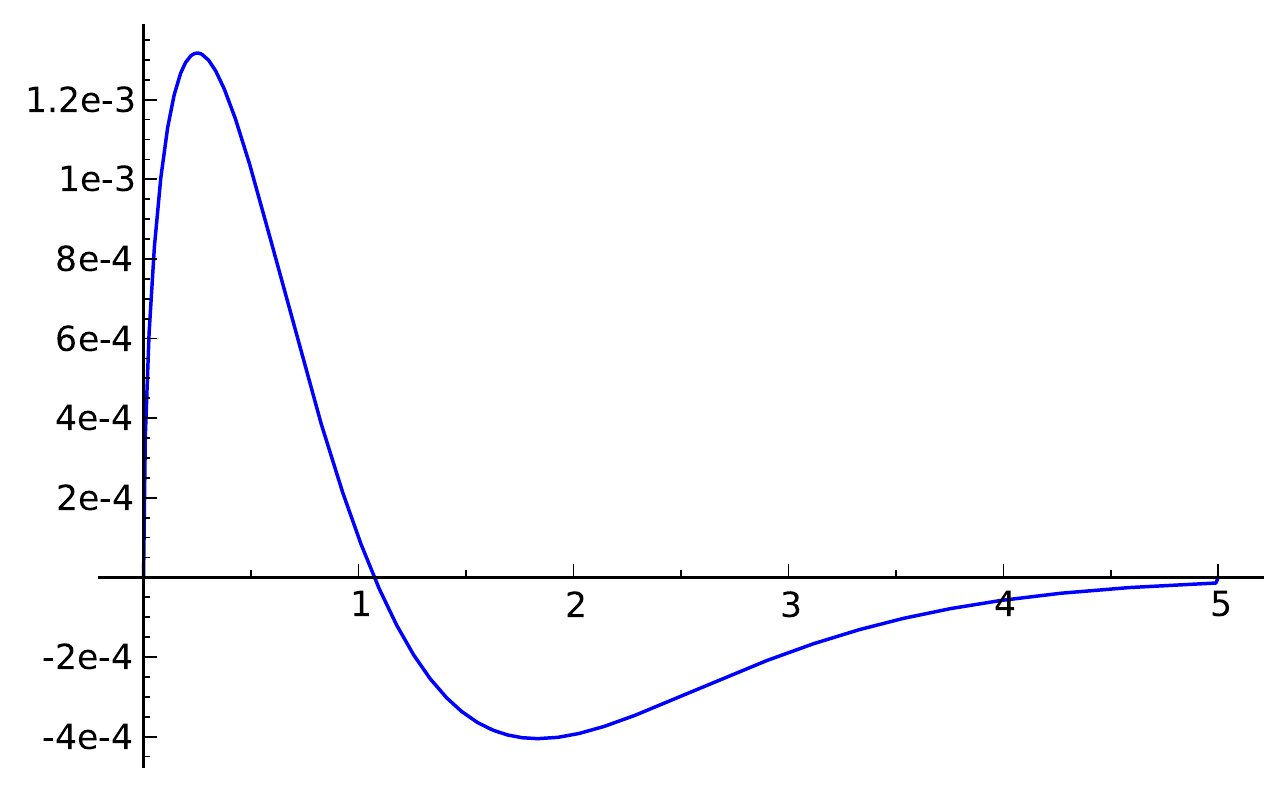}
\caption{The difference $\tilde{k}-k$}
%\caption{The ratio $k(x)/\overline{\mathbb{P}i}(\log 1/x)$}
\end{minipage}
\end{figure}

\begin{figure}
\begin{minipage}[b]{0.5\linewidth}
\centering
\includegraphics[width=8cm]{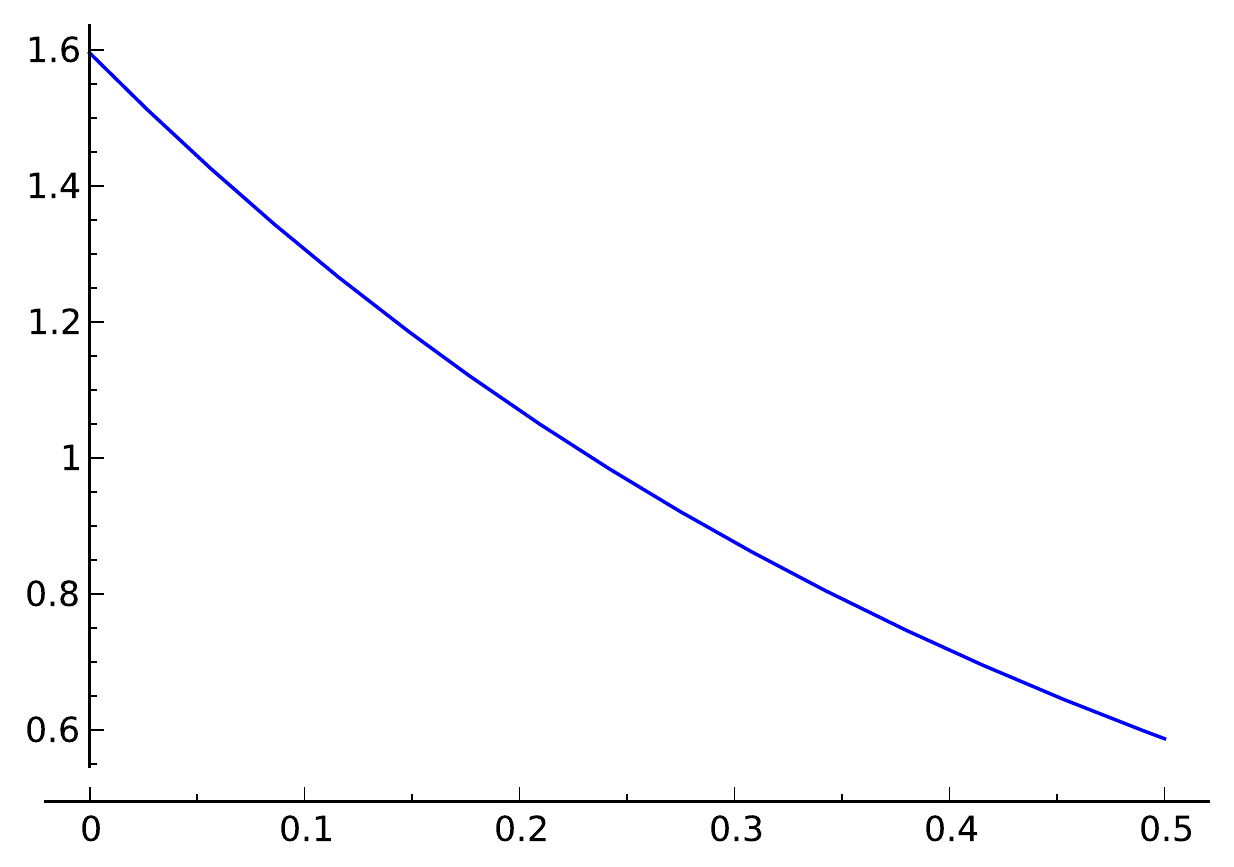}
\caption{The ratio $k(x)/\overline{\Pi}(\log 1/x)$}
\end{minipage}
\hspace{0.1cm}
\begin{minipage}[b]{0.5\linewidth}
\centering
\includegraphics[width=8cm]{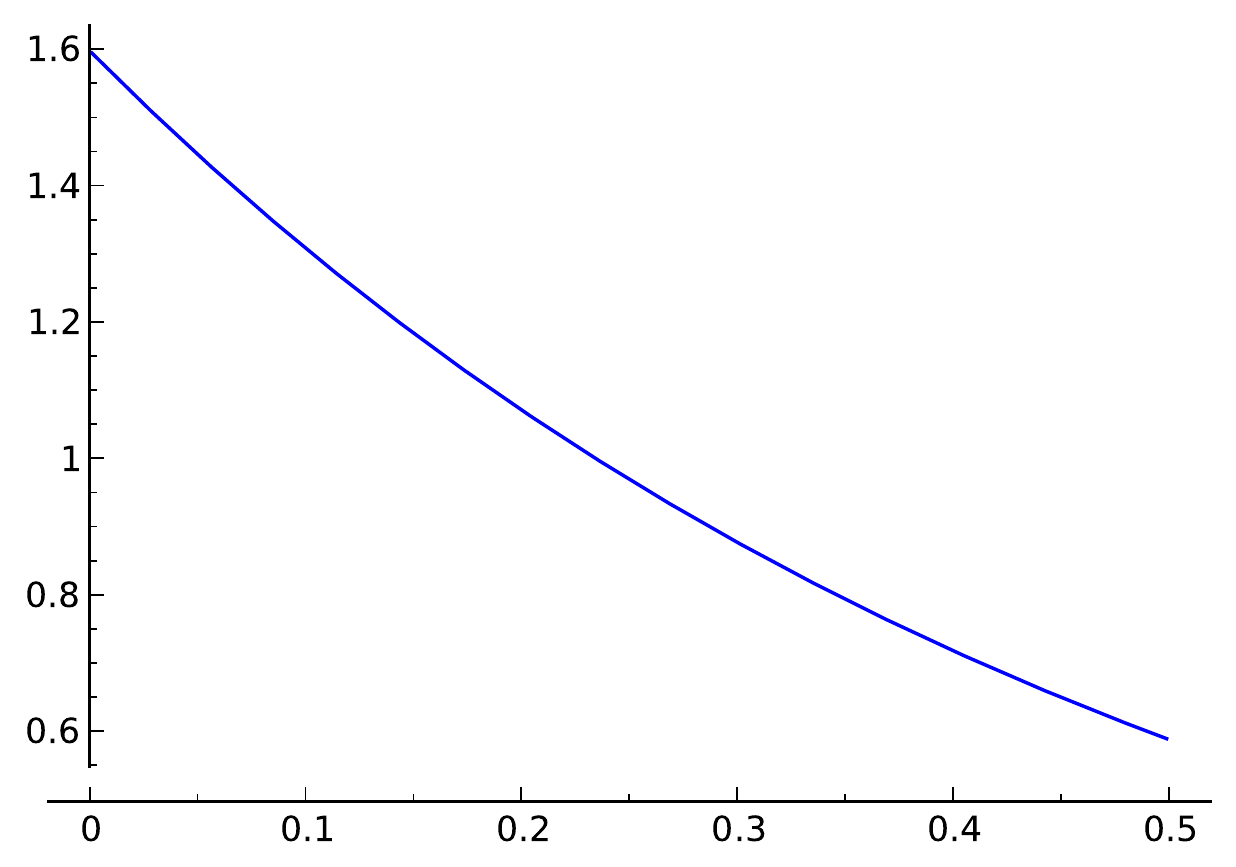}
\caption{The ratio $\tilde{k}(x)/\overline{\Pi}(\log 1/x)$}
\end{minipage}
\end{figure}

\begin{figure}
\begin{minipage}[b]{0.5\linewidth}
\centering
\includegraphics[width=8cm]{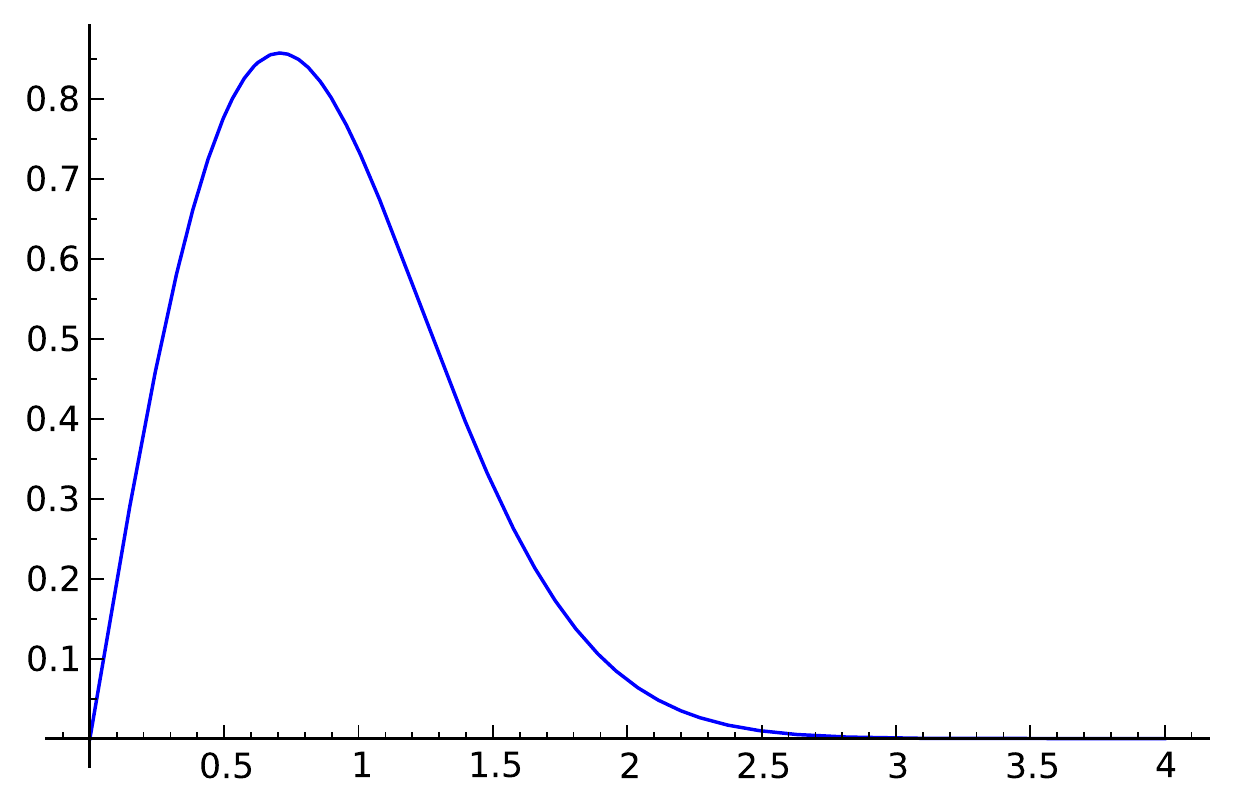}
\caption{The density function $k$}
\end{minipage}
\hspace{0.1cm}
\begin{minipage}[b]{0.5\linewidth}
\centering
\includegraphics[width=8cm]{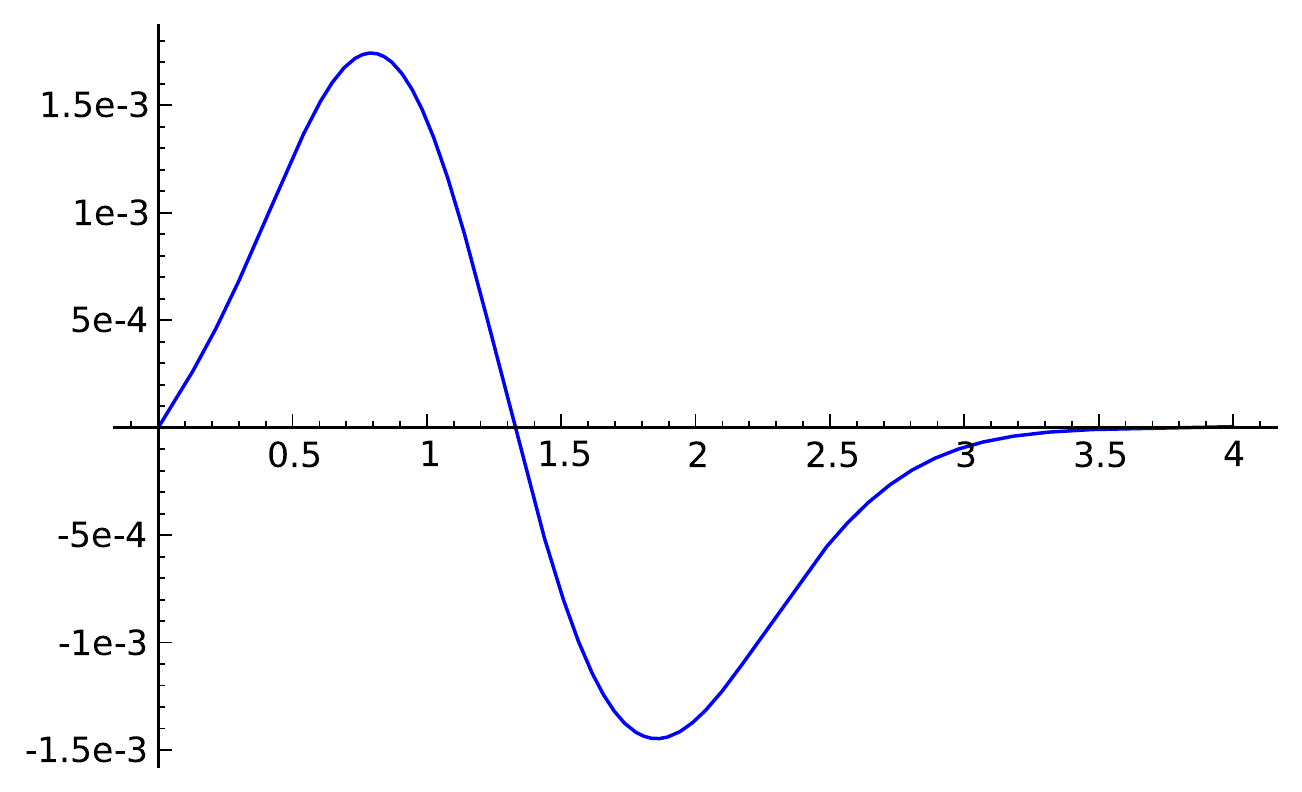}
\caption{The difference $\tilde{k}-k$}
%\caption{The ratio $k(x)/\overline{\mathbb{P}i}(\log 1/x)$}
\end{minipage}
\end{figure}

\begin{figure}
\begin{minipage}[b]{0.5\linewidth}
\centering
\includegraphics[width=8cm]{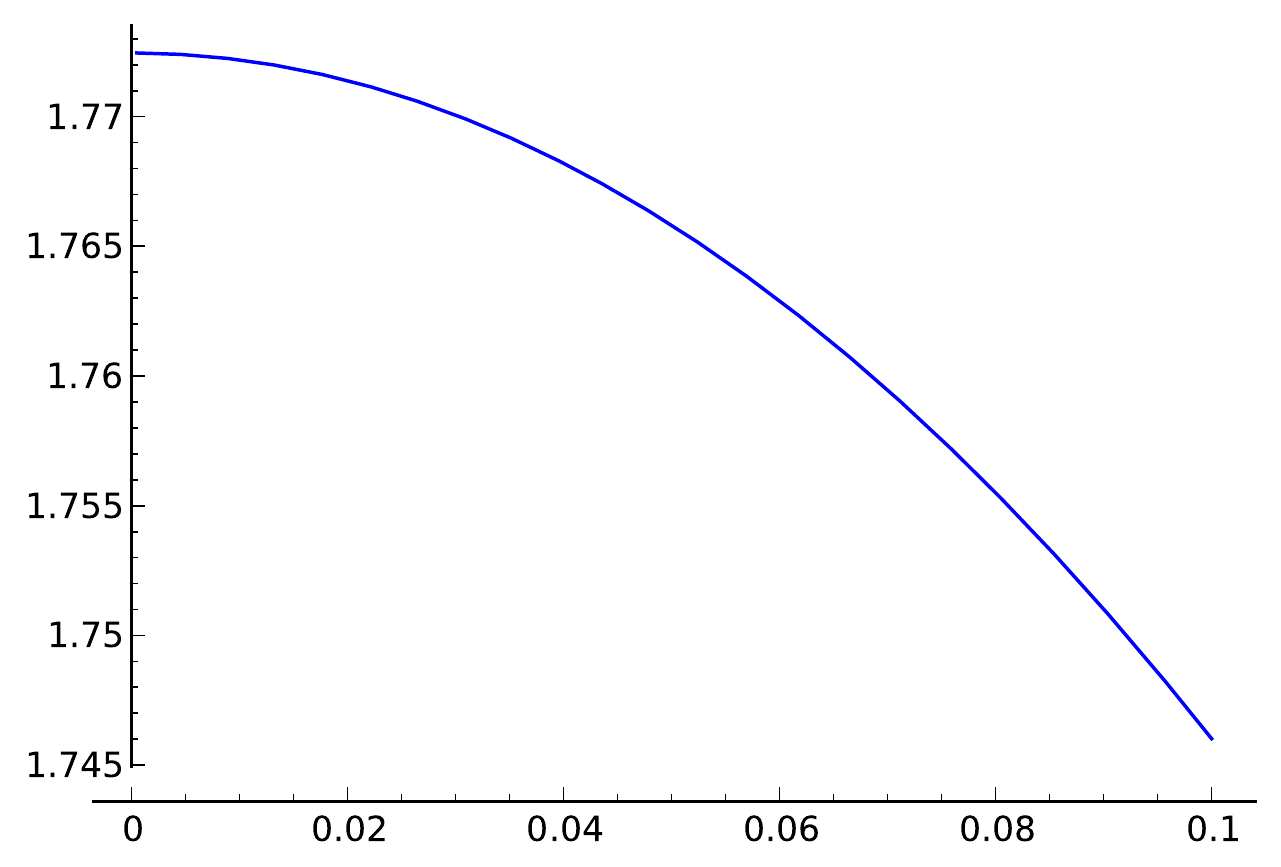}
\caption{The ratio $k(x)/\overline{\Pi}(\log 1/x)$}
\end{minipage}
\hspace{0.1cm}
\begin{minipage}[b]{0.5\linewidth}
\centering
\includegraphics[width=8cm]{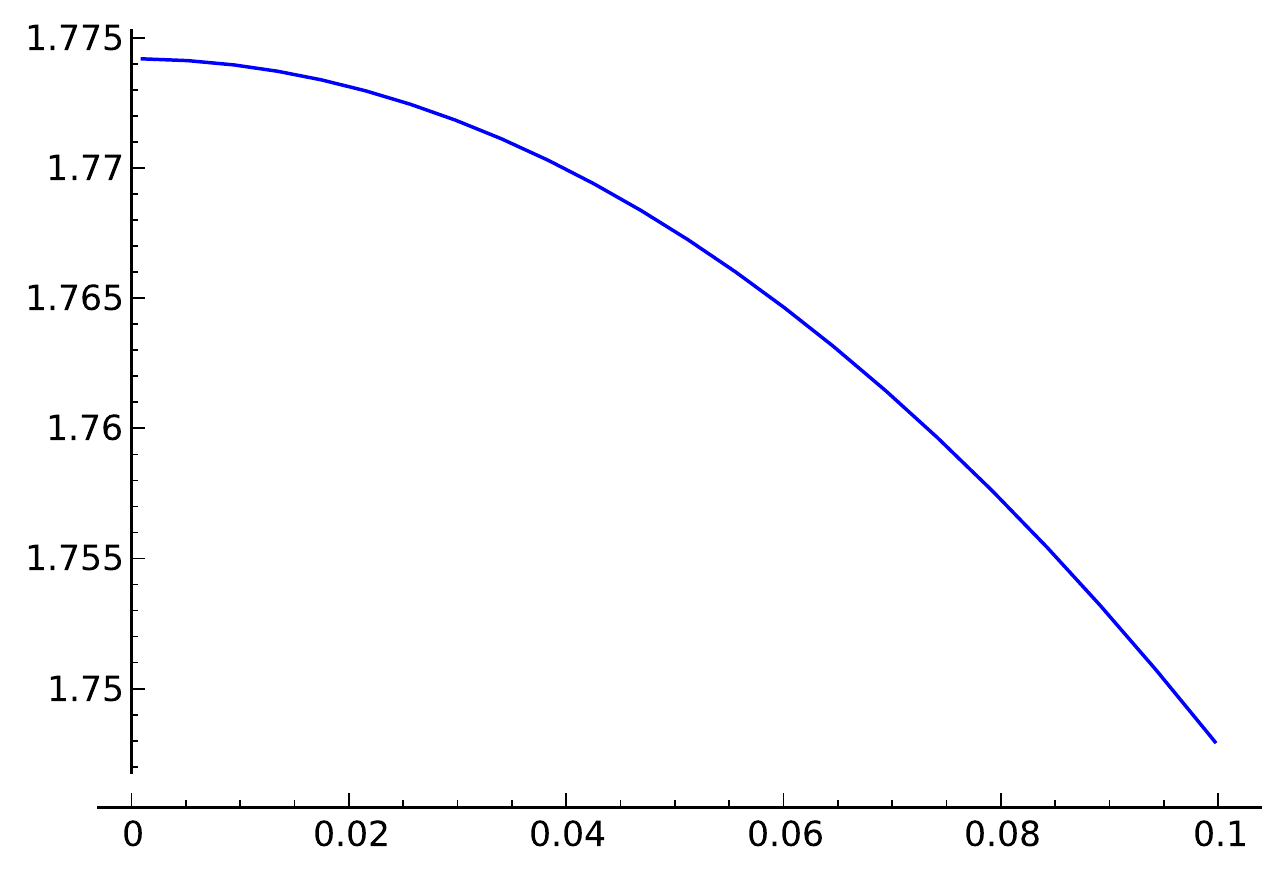}
\caption{The ratio $\tilde{k}(x)/\overline{\Pi}(\log 1/x)$}
\end{minipage}
\end{figure}

\begin{figure}
\begin{minipage}[b]{0.5\linewidth}
\centering
\includegraphics[width=8cm]{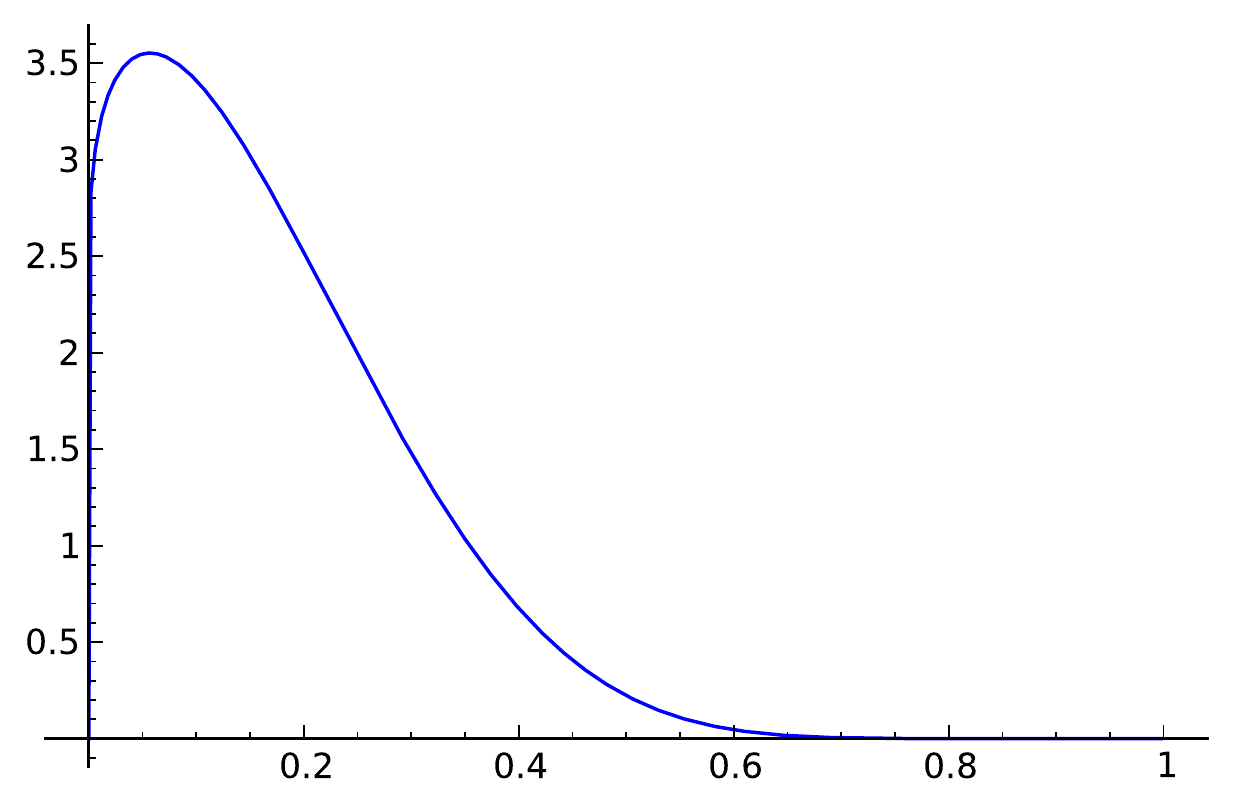}
\caption{The density function $\tilde{k}$}
\end{minipage}
\hspace{0.1cm}
\begin{minipage}[b]{0.5\linewidth}
\centering
\includegraphics[width=8cm]{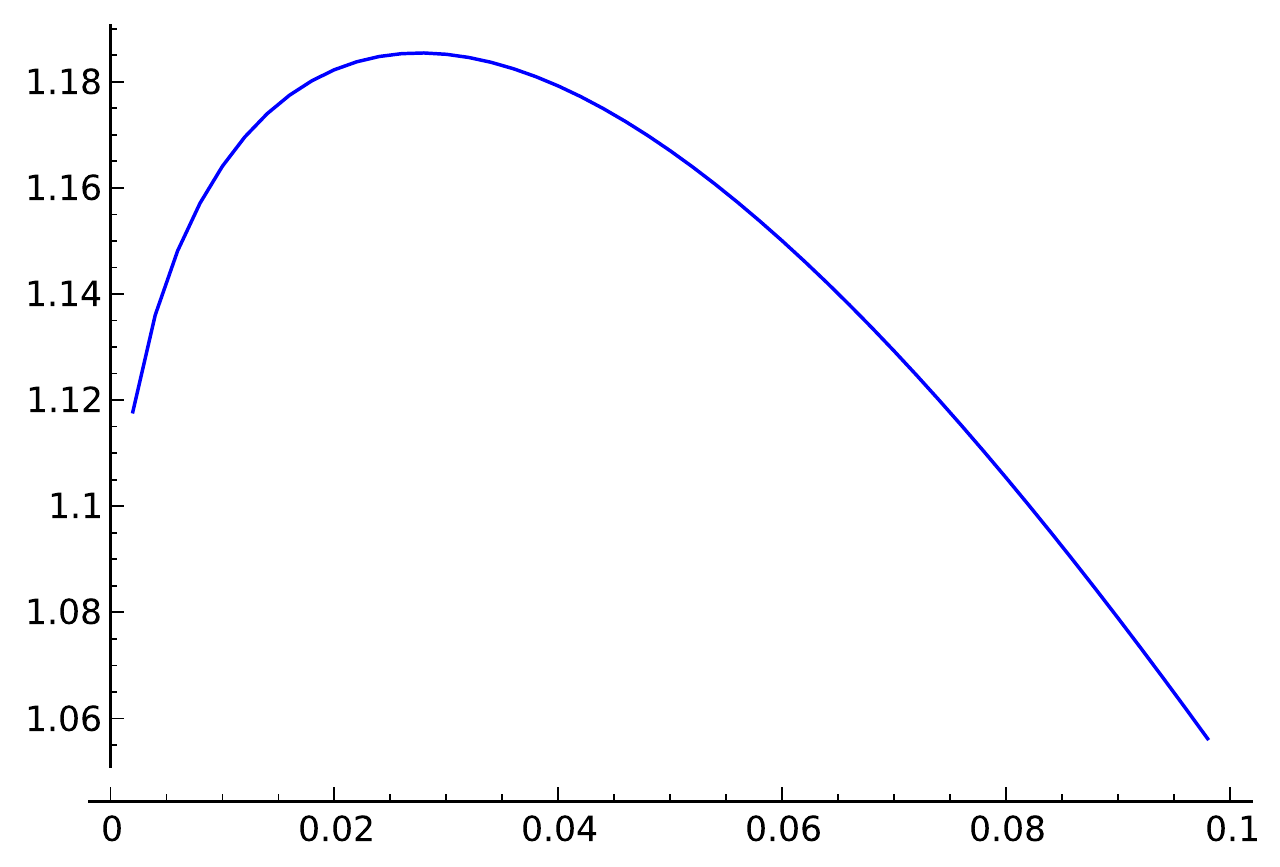}
\caption{The ratio $\tilde{k}(x)/\overline{\Pi}(\log 1/x)$}
\end{minipage}
\end{figure}

\begin{figure}
\begin{minipage}[b]{0.5\linewidth}
\centering
\includegraphics[width=8cm]{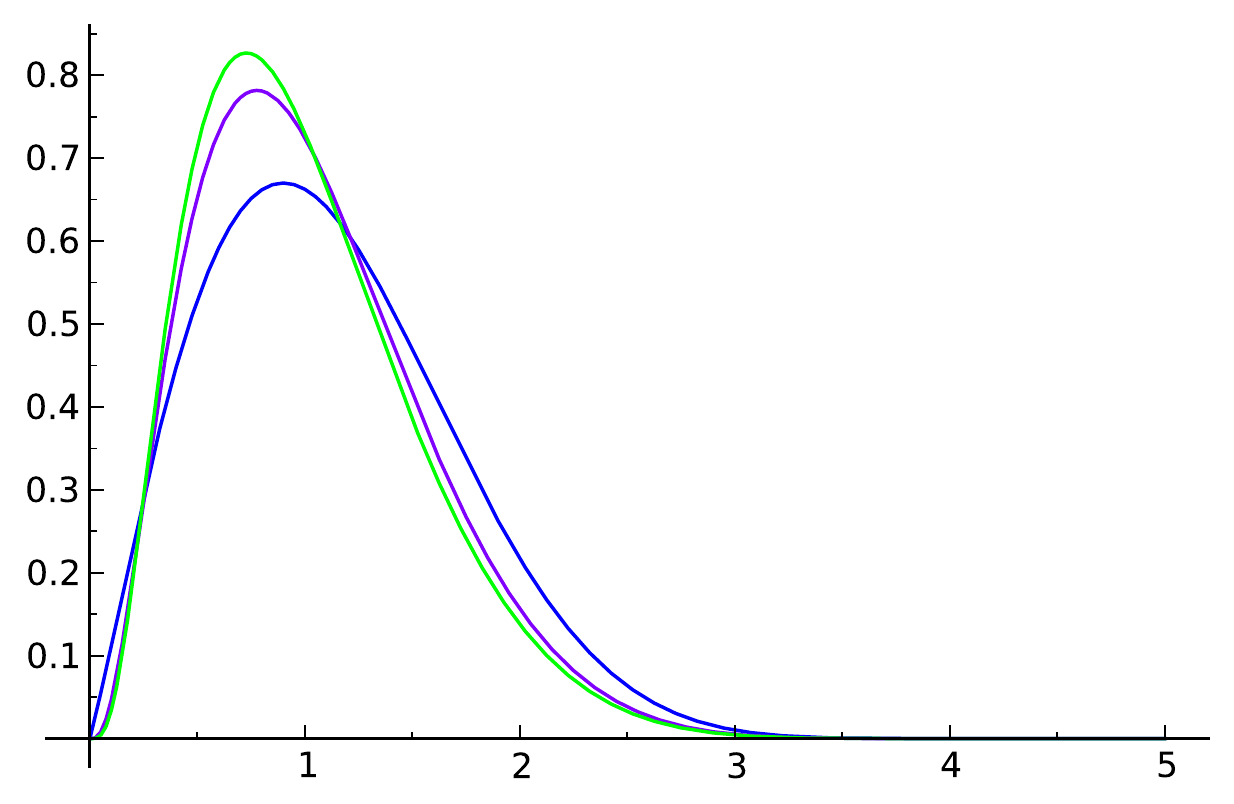}
\caption{Density functions $\tilde{k}$}
\end{minipage}
\hspace{0.1cm}
\begin{minipage}[b]{0.5\linewidth}
\centering
\includegraphics[width=8cm]{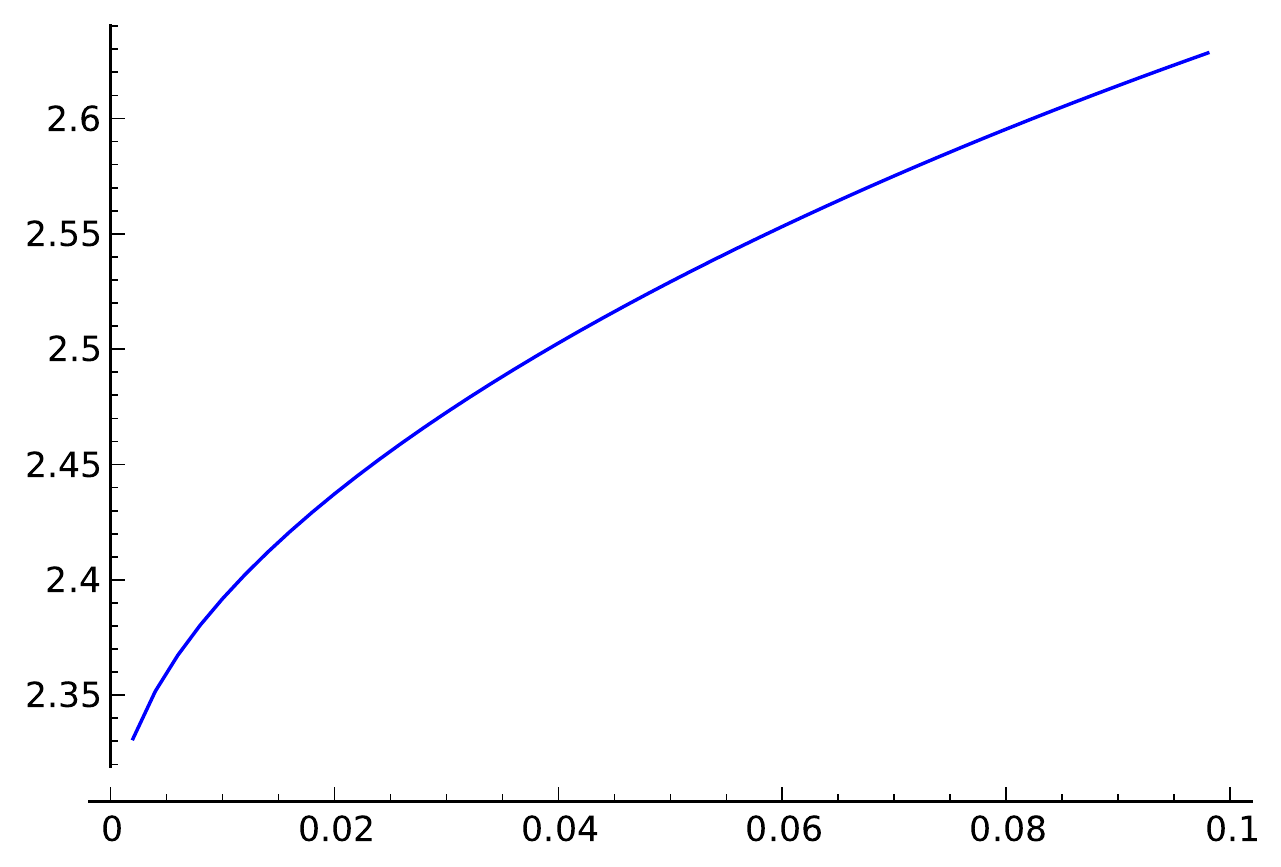}
\caption{The ratio $\tilde{k}(x)/\overline{\Pi}(\log 1/x)$}
\end{minipage}
\end{figure}

\clearpage

%\bibliographystyle{D:/Wiskunde/TexBibFiles/aptKees.bst}
%\bibliography{D:/Wiskunde/TexBibFiles/TexRefs}

\end{document}